\newtheorem{thm}{Theorem}[section]
\newtheorem{defi}[thm]{Definition}
\newtheorem{ex}[thm]{Example}
\newtheorem{lem}[thm]{Lemma}
\begin{document}

\begin{center}
\textbf{\large Wavelet Sets and Generalized Scaling Sets on Vilenkin Group}
\end{center}

\begin{center}
Prasadini Mahapatra\\
Department of Mathematics, National Institute of Technology Rourkela, Odisha, India.\\
prasadinirkl.12@gmail.com\\
\vspace{0.5cm}
Arpit Chandan Swain \\
Department of Biology, Utrecht University, The Netherlands.\\
swainarpit@gmail.com\\
\vspace{0.5cm}
Divya Singh \\
Department of Mathematics, National Institute of Technology Rourkela, Odisha, India.\\
divya\_allduniv@yahoo.com
\end{center}

\date{Received:     ; Accepted:          }
%% The correct dates will be entered by the editor

\noindent \textbf{Abstract--} For Vilenkin group only the existence of multiwavelets associated with multiresolution analysis (MRA) is known. In this paper, we have shown that by using wavelet sets we can also construct single wavelet in case of Vilenkin group which are not associated with MRA. We have given characterization of single and multi-wavelet sets on Vilenkin group. Further, we have studied generalized scaling sets, some of their properties and relation between wavelet sets and generalized scaling sets.

\noindent \textbf{Keywords:} Vilenkin group, Wavelet set, Scaling set, Generalized scaling set, MRA.

\noindent \textbf{MSC 2010:} 42C40, 42C10, 43A70

%\text{Department of Mathematics, National Institute of Technology Rourkela, India.}
 %     prasadinirkl.12@gmail.com                

%         Arpit Chandan Swain \at
%         Department of Biology, Utrecht University, The Netherlands.\\
%         \email{swainarpit@gmail.com}
%           \and
%            Divya Singh \at
%           Department of Mathematics, National Institute of Technology Rourkela, India.\\
%           \email{divya\_allduniv@yahoo.com}
%}
%

\section{Introduction}

In the last two decades several generalizations and extensions of wavelets have been introduced. Some fundamental ideas from wavelet theory, such as multiresolution analysis (MRA), have appeared in very different contexts. This paper is related to one such generalization of wavelets.

Walsh functions were introduced by J. Walsh in 1923 as linear combination of Haar functions. N. J. Fine and N. Ya Vilenkin independently determined that Walsh system is the group of characters of the Cantor dyadic group. Vilenkin introduced a large class of locally compact abelian groups, called Vilenkin groups, which includes Cantor dyadic group as a particular case. The generalised Walsh functions form an orthonormal system in the Vilenkin group $G$ and mask of refinable equation is given in terms of these generalised Walsh functions. Refinable equation gives refinable function which generates MRA and hence wavelets, if the mask satisfies certain conditions. Necessary and sufficient conditions were given over the mask of scaling function $\phi$ in terms of modified Cohen's condition and blocked sets such that $\phi$ generates an MRA.  

A lot of work related to wavelets on Vilenkin group has been done, but all the wavelets construction is based on only MRA method. In case of $L^2(\mathbb{R}^n)$ non-MRA wavelets and wavelet sets have been extensively studied by several authors \cite{dai,dai1,hern1,hern2,Ionascu,merrill}. In case of Vilenkin group if the associated prime $p$ is greater than 2, then MRA generates a multiwavelet set having $p-1$ functions. In this paper we have studied single wavelets on Vilenkin group generated by wavelet sets. The concept of generalized scaling set was given in \cite{bownik}. Generalized scaling sets determine wavelet sets and hence wavelets. Any result related to generalized scaling sets is not available in case of locally compact abelian groups. This paper consists of characterization of generalized scaling sets and their properties.

The algebraic and topological structure of Vilenkin groups is given in section 2 along with basic results on wavelets. Characterization of single/multi-wavelet sets is given in section 3 with relevant examples and section 4 consists of properties of generalized scaling sets on Vilenkin group. 

%\caption{email-prasadinirkl.12@gmail.com}

\section{Preliminaries}

\subsection{The Vilenkin Group}

For a prime $p$, Vilenkin group $ G $ is defined as the group of sequences
$$
x=(x_{j})=(..., 0, 0, x_{k}, x_{k+1}, x_{k+2},...),
$$
where $x_{j}\in \lbrace 0, 1, ..., p-1\rbrace $, for $ j \in \mathbb{Z} $ and $ x_{j}=0 $, for $ j < k = k(x)$. The group operation on $ G $, denoted by $ \oplus $, is defined as coordinatewise addition modulo $ p $:
$$
(z_{j})=(x_{j})\oplus (y_{j})\Leftrightarrow z_{j}=x_{j} + y_{j}(\text{ mod }\: p), \text{ for } j \in \mathbb{Z}.
$$
$\theta$ denotes the identity element (zero) of $G$.
Let
$$
U_{l}=\{(x_{j}) \in G : x_{j}=0\: \text{ for }\: j\leq l\} ,\quad l\in \mathbb{Z},
$$
be a system of neighbourhoods of zero in $G$. In case of topological groups if we know neighbourhood system $\{ U_{l} \}_{l \in \Bbb{Z}}$ of the identity element, then we can determine neighbourhood system of every point $x=(x_{j}) \in G$ given by $\{ U_{l} \oplus x \}_{l \in \Bbb{Z}}$, which in turn generates a topology on $G$.

Let $ U=U_{0} $ and $ \ominus $ denotes the inverse operation of $ \oplus $. The Lebesgue spaces $ L^{q}(G),\: 1\leq q\leq \infty $, are defined with respect to the Haar measure $ \mu $ on Borel subsets of $ G $ normalized by $ \mu(U)=1 $.

The group dual to $ G $ is denoted by $ G^{*} $ and consists of all sequences of the form
$$
\omega=(\omega_{j})=(...,0,0,\omega_{k},\omega_{k+1},\omega_{k+2},...),
$$
where $ \omega_{j} \in \lbrace 0,1,...,p-1\rbrace $, for $ j \in \mathbb{Z} $ and $ \omega_j=0 $, for $ j<k=k(\omega) $. The operations of addition and subtraction, the neighbourhoods $\lbrace U_{l}^{*}\rbrace $ and the Haar measure $ \mu^{*} $ for $ G^{*} $ are defined similarly as for $G$. Each character on $ G $ is defined as
$$
\chi(x,\omega)=\exp\bigg(\frac{2\pi i}{p}\sum_{j\in \mathbb {Z}}{x_{j}w_{1-j}}\bigg),\; x \in G,
$$
for some $ \omega \in G^{*} $.

Let $H = \lbrace (x_{j}) \in G\: |\: x_{j} =0, \; \text{ for } j>0\rbrace$ be a discrete subgroup in $G$  and $A$ be an automorphism on $G$ defined by $(Ax)_j=x_{j+1}$, for $x=(x_j) \in G$. From the definition of annihilator and above definition of character $\chi$, it follows that the annihilator $H^{\perp}$ of the subgroup $H$ consists of all sequences $(\omega_j) \in G^{*}$ which satisfy $\omega_j=0$, for $j>0$.

Let $ \lambda:G\longrightarrow \mathbb R_+$ be defined by
\begin{equation*}
\lambda(x)=\sum_{j \in \mathbb{Z}}{x_{j} p^{-j}}, \qquad x=(x_j) \in G.
\end{equation*}
It is obvious that the image of $H$ under $\lambda$ is the set of non-negative integers $\mathbb Z_{+}$. For every $\alpha \in \mathbb Z_{+}$, let $h_{[\alpha]}$ denote the element of $H$ such that $\lambda(h_{[\alpha]})=\alpha$. For $G^{*}$, the map $ \lambda^{*}:G^{*}\longrightarrow \mathbb R_{+}$, the automorphism $B \in \text{Aut } G^{*}$, the subgroup $U^{*}$ and the elements $\omega_{[\alpha]}$ of $H^{\perp}$ are defined similar to $\lambda$, $A$, $U$ and $h_{[\alpha]}$, respectively. %Further, $ \chi(Ax,\omega)=\chi(x,B\omega) $ for all $ x \in G$, $\omega \in G^{*} $.

The generalized Walsh functions for $ G $ are defined by
\begin{equation*}
W_\alpha(x)=\chi(x,\omega_{[\alpha]}),\quad \alpha \in \mathbb Z_+, x \in G.
\end{equation*}
These functions form an orthogonal set for $L^2(U)$, that is,
\begin{equation*}
\int_{U}{W_{\alpha}(x) \overline{W_{\beta}(x)}d\mu(x)}=\delta_{\alpha,\beta},\quad \alpha,\beta \in \mathbb Z_{+},
\end{equation*}
where $ \delta_{\alpha,\beta} $ is the Kronecker delta. The system $ {W_{\alpha}} $ is complete in $ L^{2}(U) $. The corresponding system for $ G^{*} $ is defined by
\begin{equation*}
W_{\alpha}^{*}(\omega)=\chi(h_{[\alpha]},\omega),\qquad \alpha \in \mathbb Z_{+}, \omega \in G^{*}.
\end{equation*}
The system $\lbrace W_{\alpha}^{*} \rbrace$ is an orthonormal basis of $ L^{2}(U^{*}) $.

For positive integers $ n $ and $ \alpha $,
\begin{equation*}
U_{n,\alpha}=A^{-n}(h_{[\alpha]}) \oplus A^{-n}(U).
\end{equation*}

\subsection{Wavelets on Vilenkin group}

In \cite{lang1} Lang constructed compactly supported orthogonal wavelets on the locally compact Cantor dyadic group. These wavelets were identified with Walsh series on the real line and included the Haar basis. By invoking the Calederon-Zygmund integral operator theory Lang \cite{lang2} proved that if a wavelet on Cantor dyadic group satisfies Lipschitz type regularity condition then the wavelet series converges unconditionally in $L^q$ (for $q > 1$). Farkov \cite{farkov,farkov1,farkov2} extended the results of Lang to Vilenkin groups. In \cite{farkov}, Strang-fix condition, partition of unit property and the stability of scaling functions were considered. Further, necessary and sufficient conditions were given over the mask of scaling function $\phi$ in terms of modified Cohen's condition and blocked sets such that $\phi$ generates an MRA. Farkov also gave an algorithm to construct orthogonal wavelets in $L^2(G)$. In case of Vilenkin group if the associated prime $p$ is greater than 2, then MRA generates a set having $p-1$ functions.

%Necessary and sufficient conditions are given for scaling functions to generate an MRA in the $L^2$ space on Vilenkin groups by using modified Cohen's condition and blocked sets. 

\begin{defi}\cite{farkov} Let $ L_{c}^{2}(G) $ be the set of all compactly supported functions in $ L^{2}(G) $. A function $ \phi \in L_{c}^{2}(G) $ is said to be a \textit{refinable function}, if it satisfies an equation of the type
\begin{equation}
\phi(x)=p\sum_{\alpha =0}^{p^{n}-1}{a_{\alpha}\phi (Ax\ominus h_{[\alpha ]})}.
\end{equation}
The above functional equation is called the \textit{refinement equation}. The generalized Walsh polynomial
\begin{equation}
m(\omega)=\sum_{\alpha =0}^{p^{n}-1}{a_{\alpha} \overline{W_{\alpha}^{*}{(\omega)}}}
\end{equation}
is called the \textit{mask} of the refinement equation (or the mask of its solution $ \phi $).
\end{defi}

\begin{thm}\cite{farkov}
\textit{Let $ \phi \in L_{c}^{2}{(G)} $ be a solution of the refinement equation, and let $ \widehat{\phi}(\theta)=1 $. Then,
$$
\sum_{\alpha =0}^{p^{n}-1}{a_{\alpha}}=1, \qquad \text{supp } \phi \subset U_{1-n},
$$
and
$$ 
\widehat{\phi}(\omega)=\prod_{j=1}^{\infty}{m(B^{-j}\omega)}.
$$
Moreover, the following properties are true:\\
$1$. $ \widehat{\phi}(h^{*})=0 $, for all $ h^{*} \in H^{\perp}\setminus {\lbrace \theta\rbrace} $ (the modified Strang-Fix condition),\\
$2$. $ \sum_{h \in H}{\phi (x\oplus h)}=1 $, for almost every $ x \in G $ (the partition of unit property)}.
\end{thm}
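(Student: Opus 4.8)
The backbone of the whole statement is a single two-scale identity in the Fourier domain, from which every assertion follows. The plan is first to apply the Fourier transform to the refinement equation. Writing $\widehat{\phi}(\omega)=\int_G\phi(x)\overline{\chi(x,\omega)}\,d\mu(x)$, substituting $y=Ax\ominus h_{[\alpha]}$ in each summand, and using the two facts $\chi(Ax,\omega)=\chi(x,B\omega)$ (so that $\chi(A^{-1}y,\omega)=\chi(y,B^{-1}\omega)$) together with the Haar modulus $d\mu(A^{-1}y)=p^{-1}\,d\mu(y)$, the character splits as $\chi(y\oplus h_{[\alpha]},B^{-1}\omega)=\chi(y,B^{-1}\omega)\,W_\alpha^{*}(B^{-1}\omega)$. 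Collecting the coefficients $a_\alpha$ reproduces exactly the mask $m$, and the factors of $p$ cancel, giving the master identity $\widehat{\phi}(\omega)=m(B^{-1}\omega)\,\widehat{\phi}(B^{-1}\omega)$. I regard this computation as routine bookkeeping with the modulus of $A$ and the duality $\chi(Ax,\omega)=\chi(x,B\omega)$.

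Granting the master identity, three of the conclusions are short. Evaluating at $\omega=\theta$ and using $B\theta=\theta$ and $W_\alpha^{*}(\theta)=1$ gives $1=\widehat{\phi}(\theta)=m(\theta)\widehat{\phi}(\theta)$, hence $m(\theta)=\sum_\alpha a_\alpha=1$. Iterating the identity $N$ times yields $\widehat{\phi}(\omega)=\prod_{j=1}^{N}m(B^{-j}\omega)\,\widehat{\phi}(B^{-N}\omega)$; since $(B^{-N}\omega)_k=\omega_{k-N}$ forces $B^{-N}\omega\to\theta$, and since $\phi\in L_{c}^{2}(G)\subset L^{1}(G)$ makes $\widehat{\phi}$ continuous with $\widehat{\phi}(\theta)=1$, letting $N\to\infty$ gives the product formula $\widehat{\phi}(\omega)=\prod_{j=1}^{\infty}m(B^{-j}\omega)$. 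For the support claim I would argue in the time domain: from the refinement equation one reads off $\operatorname{supp}\phi\subset T(\operatorname{supp}\phi)$, where $T(S)=\bigcup_{\alpha=0}^{p^{n}-1}A^{-1}(h_{[\alpha]}\oplus S)$. Because each $h_{[\alpha]}$ with $0\le\alpha\le p^{n}-1$ is supported in the coordinates $\{1-n,\dots,0\}$, one has $h_{[\alpha]}\in U_{-n}$, whence $T(U_{l})=U_{l+1}$ whenever $l\le -n$, while $T(U_{1-n})\subset U_{1-n}$. Starting from $\operatorname{supp}\phi\subset U_{-M}$ (valid for some large $M$ by compact support) and applying the monotone map $T$ repeatedly drives the support down one level at a time to the fixed level $U_{1-n}$.

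The two ``moreover'' properties are, in my view, the heart of the matter, and they are two faces of one fact. By Poisson summation for the dual pair $(H,H^{\perp})$ (with the normalization $\mu(U)=1$ so that no extra constant appears), the $H$-periodization $\Phi(x)=\sum_{h\in H}\phi(x\oplus h)$, which is well defined and $H$-periodic since $\operatorname{supp}\phi$ is compact, has Fourier coefficients precisely $\{\widehat{\phi}(h^{*})\}_{h^{*}\in H^{\perp}}$. Hence the modified Strang--Fix condition $\widehat{\phi}(h^{*})=0$ for $h^{*}\in H^{\perp}\setminus\{\theta\}$ is \emph{equivalent} to $\Phi\equiv\widehat{\phi}(\theta)=1$, the partition of unit property; so it suffices to prove that $\Phi$ is constant. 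The plan is to insert the refinement equation into $\Phi$ and reorganize the double sum: writing $\Psi(y)=\sum_{g\in A(H)}\phi(y\oplus g)$ for the periodization over the index-$p$ subgroup $A(H)\subset H$, the elementary identity $\sum_{h\in H}\phi(y\oplus Ah)=\Psi(y)$ produces the two-scale relation $\Phi(x)=p\sum_{\alpha=0}^{p^{n}-1}a_\alpha\,\Psi(Ax\ominus h_{[\alpha]})$, while the coset decomposition $H=\bigsqcup_{d=0}^{p-1}(h_{[d]}\oplus A(H))$ gives $\Phi=\sum_{d=0}^{p-1}\Psi(\,\cdot\oplus h_{[d]})$.

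The main obstacle is exactly here. These relations exhibit $\Phi$ as a fixed point of a finite-dimensional, Perron--Frobenius-type transfer operator; the space is finite-dimensional because on the totally disconnected group $G$ a compactly supported refinable $\phi$, and hence its periodization, is a step function, constant on the cosets of some $U_{N}$. One must then show that the \emph{only} fixed point compatible with the normalization $\widehat{\phi}(\theta)=\int_{G}\phi\,d\mu=1$ is the constant $1$. This is where the hypothesis $\phi\in L_{c}^{2}(G)$ is indispensable: it is precisely $L^{2}$ membership (equivalently, genuine convergence of the cascade iterates encoded by the product formula) that rules out the pathological refinable objects for which the periodization is non-constant and Strang--Fix fails. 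I would therefore close by invoking the uniqueness/spectral properties of this transfer operator together with the continuity of $\widehat{\phi}$ at $\theta$, obtaining the partition of unit property, and then read off the modified Strang--Fix condition through the Poisson equivalence above.
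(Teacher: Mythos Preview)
The paper does not supply its own proof of this theorem; it is quoted verbatim from Farkov with only a citation, so there is no in-paper argument against which to compare. Assessing your proposal on its own merits: the derivation of the two-scale identity $\widehat\phi(\omega)=m(B^{-1}\omega)\widehat\phi(B^{-1}\omega)$, the normalization $\sum_\alpha a_\alpha=1$, the infinite product formula via continuity of $\widehat\phi$ and $B^{-N}\omega\to\theta$, and the support bound via iteration of the monotone map $T$ are all correct and cleanly executed.

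The genuine gap is in the Strang--Fix / partition-of-unity part. You correctly reduce both statements, via Poisson summation, to showing that the periodization $\Phi$ is identically $1$, and you correctly recognize $\Phi$ as a fixed point of a finite-rank transfer operator acting on step functions. But the closing sentence ``I would therefore close by invoking the uniqueness/spectral properties of this transfer operator'' is not a proof: you have not established that the eigenvalue $1$ is simple, nor exhibited the mechanism that forces $\Phi$ into the one-dimensional eigenspace of constants. Your diagnosis that ``it is precisely $L^{2}$ membership\dots that rules out the pathological refinable objects'' is also off target: on both $\mathbb{R}$ and the Vilenkin group, partition of unity holds for compactly supported refinable functions with $\widehat\phi(\theta)=1$ already at the $L^{1}$ (or even distributional) level, so $L^{2}$ is not the operative hypothesis here. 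In the Vilenkin setting the argument can in fact be made finite and explicit, because $\operatorname{supp}\phi\subset U_{1-n}$ forces $\widehat\phi$ to be constant on cosets of $U^{*}_{n-1}$ and the mask $m$ is a genuine Walsh polynomial; exploiting this local constancy (as in Farkov's original treatment) is what actually closes the argument. As written, your proof stops precisely where the nontrivial work begins.
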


\begin{defi}\cite{farkov}
A set $ M\subset U^* $ is said to be \textit{blocked} (for the mask $ m $) if it coincides with some union of the sets $ U_{n-1,s}^{*}$, $0\leq s\leq p^{n-1}-1 $, does not contain the set $ U_{n-1,0}^{*} $, and satisfies the condition
$$
T_{p}{M} \subset M \cup \lbrace \omega \in U^{*} : m(\omega)=0\rbrace,
$$
where
$$
T_pM=\bigcup_{l=0}^{p-1} \lbrace B^{-l}\omega_{[l]}+B^{-1}(\omega): \omega \in M \rbrace.
$$
\end{defi}

\begin{defi}
A collection $(V_j)_{j \in \mathbb Z}$ of closed subspaces of $L^{2}{(G)}$ is called a Multiresolution analysis (MRA) if the following conditions are satisfied:\\
(i) $ V_{j} \subset V_{j+1} $, for all $ j \in \mathbb Z $ \\
(ii) $ \overline{\cup_{j \in \mathbb Z} V_{j}}=L^{2}{(G)} $ and $ \cap_{j \in \mathbb Z} V_{j}=\lbrace 0\rbrace $ \\
(iii) $ f(\cdot) \in V_{j}\Leftrightarrow f(A\cdot) \in V_{j+1} $, for all $ j \in \mathbb Z $\\
(iv) $ f(\cdot) \in V_{0}\Rightarrow f(\cdot \ominus h) \in V_{0} $, for all $ h \in H $\\
(v) there is a function $ \phi \in L^{2}{(G)} $ such that the system $ \lbrace \phi(\cdot \ominus h)\vert h \in H\rbrace $ is an orthonormal basis of $ V_{0} $.
		
The function $ \phi $ in condition (v) is called scaling function of the MRA $(V_j)_{j \in \mathbb Z}$.
\end{defi}

For $ \phi \in L^{2}{(G)} $,
$$
\phi_{j,h}(x)=p^{j/2}{\phi (A^{j}{x}\ominus h)}, \qquad j \in \mathbb Z, \;\; h \in H
$$
and the system $\lbrace \phi_{j,h} : h \in H\rbrace$ forms an orthonormal basis of $ V_{j} $, for every $ j \in \mathbb Z $.

\begin{thm}\cite{behera}
A function $\phi \in L^{2}(G)$ is a scaling function for an MRA of $L^{2}(G)$ if and only if
\begin{enumerate}
\item $\sum_{h \in H^{\perp}}|\hat{\phi}(\omega \oplus h)|^2=1$, for a.e $\omega \in G^{*}$
\item $lim_{j \rightarrow \infty}|\hat{\phi}(B^{-j}\omega)|=1$, for a.e $\omega \in G^{*}$
\item $\hat{\phi}(B\omega)=m(\omega)\hat{\phi}(\omega)$, for a.e $\omega \in G^{*}$.
\end{enumerate}
\end{thm}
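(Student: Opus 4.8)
The plan is to establish both implications by transferring all three conditions to the frequency side via the Plancherel theorem for $G$ together with the duality between $H$-translation on $G$ and $H^{\perp}$-periodization on $G^{*}$. The structural facts that make the classical real-line argument transfer are that $A$ acts as dilation on $G$ with dual $B$ on $G^{*}$, and that $\{W_{\alpha}^{*}\}$ is an orthonormal basis of $L^{2}(U^{*})$.

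For the necessity direction I would start from an MRA $(V_j)$ with scaling function $\phi$. Condition (v) says $\{\phi(\cdot\ominus h)\}_{h\in H}$ is orthonormal; applying Plancherel and periodizing $|\hat{\phi}|^2$ over the annihilator lattice $H^{\perp}$, the relations $\langle \phi,\phi(\cdot\ominus h)\rangle=\delta_{h,\theta}$ become exactly condition 1, since the Fourier coefficients of the $H^{\perp}$-periodized function, expanded in the basis $\{W_{\alpha}^{*}\}$ of $L^{2}(U^{*})$, must all vanish except the zeroth. Next, $V_0\subset V_1$ with condition (iii) forces $\phi\in V_1$, so $\phi=\sum_{h}c_h\,\phi_{1,h}$; taking Fourier transforms and using the scaling identity for $\widehat{\phi_{1,h}}$ yields the two-scale relation, which rearranges to condition 3 with $m$ the $H^{\perp}$-periodic mask assembled from the coefficients $c_h$.

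The harder half of necessity is condition 2, which encodes the density $\overline{\cup_j V_j}=L^2(G)$. Here I would introduce the orthogonal projection $P_j$ onto $V_j$ and, for a test function $f$ whose $\hat{f}$ is supported on a compact set bounded away from $\theta$, compute $\|P_j f\|^2$ in terms of $\hat{f}$ and $\hat{\phi}(B^{-j}\cdot)$. Density gives $\|P_j f\|\to\|f\|$, and after the change of variable $\omega\mapsto B^{-j}\omega$ the integrand is governed by $|\hat{\phi}(B^{-j}\omega)|^2$; a dominated-convergence argument over this dense class forces $|\hat{\phi}(B^{-j}\omega)|\to 1$ a.e. Justifying the passage of the limit through the integral, and the choice of the dense test class, is the main technical obstacle.

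For sufficiency I would define $V_0$ as the closed linear span of $\{\phi(\cdot\ominus h)\}$ and set $V_j=\{f:f(A^{-j}\cdot)\in V_0\}$, so that (iii), (iv), and (v) hold by construction once condition 1 guarantees orthonormality of the generating system. The inclusion $V_0\subset V_1$, hence (i), follows by reading condition 3 with $m$ an $H^{\perp}$-periodic mask, which exhibits $\phi$ as an element of $V_1$. The trivial-intersection part of (ii) is a standard consequence of the scaling structure together with condition 1, while the density part is recovered from condition 2 by showing $P_j f\to f$ on the same dense class, reversing the limiting argument of the necessity proof.
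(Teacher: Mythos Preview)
The paper does not prove this theorem: it is quoted from \cite{behera} (Behera and Jahan) and stated without proof in the preliminaries section, as background for the later material on wavelet sets and generalized scaling sets. There is therefore no in-paper argument to compare your proposal against.

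For what it is worth, your outline follows the standard route used in the classical $L^2(\mathbb{R})$ setting and in \cite{behera} for local fields: orthonormality $\Leftrightarrow$ condition~1 via Plancherel and $H^{\perp}$-periodization, the two-scale relation $\Leftrightarrow$ condition~3 from $V_0\subset V_1$, and condition~2 tied to the density axiom through the projections $P_j$ and a dominated-convergence argument on a dense class of band-limited test functions. The places you flag as delicate (the limit exchange for condition~2, and the trivial-intersection part of sufficiency) are indeed the points where the real work lies, but the strategy is sound and matches the literature.
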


A function $ \phi $ is said to generate an MRA in $ L^{2}{(G)} $ if the system $ \lbrace \phi(\cdot \ominus h)\vert h \in H\rbrace $ is orthonormal in $ L^{2}{(G)} $ and, the family of subspaces
$$
V_{j}=\overline{\text{span}}\lbrace \phi_{j,h} : h \in H\rbrace, \qquad j \in \mathbb Z,
$$
forms an MRA in $ L^{2}{(G)}$ with scaling function $\phi$. Farkov gave the following condition under which a compactly supported function $\phi \in L^2(G)$ generates an MRA.

\begin{thm}\cite{farkov}
Suppose that the refinement equation possesses  a solution $\phi$ such that $\widehat{\phi}(\theta)=1$ and the corresponding mask $m$ satisfies the conditions
$$
m(\theta)=1 \text{ and } \Sigma_{l=0}^{p-1} \vert m(\omega \oplus \delta_{l}) \vert^2=1, \text{ for } \omega \in G^*,
$$
where $\delta_l$ is the sequence $\omega=(\omega_j)$ such that $\omega_1=l$ and $\omega_j=0$ for $j \neq 1$. Then the following are equivalent:\\
(a) $\phi$ generates an MRA in $L^2(G)$. \\
(b) $m$ satisfies the modified Cohen's condition, i.e. there exists a compact subset $E$ of $G^*$ containing a neighborhood of zero such that $E$ is translation congruent to $U^*$ modulo $H^{\perp}$ and $\inf_{j \in \mathbb N} \inf _{ \omega \in E} |m(B^{-j}\omega)| > 0$.\\
(c) $m$ has no blocked sets.
\end{thm}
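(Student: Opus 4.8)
\section*{Proof proposal}

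The plan is to establish the cycle $(b)\Rightarrow(a)\Rightarrow(c)\Rightarrow(b)$, using the orthonormality identity $\Phi(\omega):=\sum_{h\in H^{\perp}}|\widehat{\phi}(\omega\oplus h)|^{2}=1$ a.e.\ as the common pivot. Since the mask satisfies $m(\theta)=1$ and the quadrature relation $\sum_{l=0}^{p-1}|m(\omega\oplus\delta_{l})|^{2}=1$, the first theorem of Farkov yields $\widehat{\phi}(\omega)=\prod_{j\ge1}m(B^{-j}\omega)$. Consequently condition $(3)$ of the scaling-function characterization (Theorem, \cite{behera}) is immediate from this product, and condition $(2)$ follows from the continuity of $\widehat{\phi}$ at $\theta$ together with $B^{-j}\omega\to\theta$ and $\widehat{\phi}(\theta)=1$. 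Hence statement $(a)$ is equivalent to condition $(1)$, namely $\Phi\equiv1$. First I would record three facts: $\Phi$ is $H^{\perp}$-periodic; $\Phi\le1$ always (from the quadrature relation and Fatou's lemma applied to the truncated products); and $\Phi$ is a fixed point of the Ruelle transfer operator $(Rf)(\omega)=\sum_{l=0}^{p-1}|m(\tau_{l}\omega)|^{2}f(\tau_{l}\omega)$, where $\tau_{0},\dots,\tau_{p-1}$ are the $p$ inverse branches of $B$ on $U^{*}$ appearing in the definition of $T_{p}$. By the quadrature condition the constant $1$ is another fixed point of $R$, so $\Phi\equiv1$ is exactly the statement that $1$ is the only relevant fixed point, and both $(b)$ and $(c)$ are conditions forcing this.

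For $(b)\Rightarrow(a)$ I would run the Cohen argument. Define the truncations $g_{j}(\omega)=\mathbf 1_{E}(B^{-j}\omega)\prod_{k=1}^{j}m(B^{-k}\omega)$. Because $E$ is translation congruent to $U^{*}$ modulo $H^{\perp}$ and the quadrature relation telescopes, $\int_{G^{*}}|g_{j}|^{2}\,d\mu^{*}=1$ for every $j$. The lower bound $\inf_{j}\inf_{\omega\in E}|m(B^{-j}\omega)|>0$ supplied by the modified Cohen condition, together with the boundedness of $\widehat{\phi}$ on the compact set $E$, produces an integrable dominating function; dominated convergence with $g_{j}\to\widehat{\phi}$ pointwise then gives $\int_{G^{*}}|\widehat{\phi}|^{2}\,d\mu^{*}=1$, so the average of $\Phi$ over a fundamental domain for $H^{\perp}$ equals $1$. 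Combined with $\Phi\le1$ this forces $\Phi\equiv1$, hence $(a)$.

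For $(a)\Rightarrow(c)$ I would argue by contraposition. A blocked set $M$ is, by definition, a union of cells $U^{*}_{n-1,s}$ that avoids the neighborhood $U^{*}_{n-1,0}$ of $\theta$ and satisfies $T_{p}M\subset M\cup\{\omega\in U^{*}:m(\omega)=0\}$. Tracing the orbit of $M$ under the branch dynamics, this invariance forces every product $\widehat{\phi}$ evaluated at points whose $B$-orbit enters $M$ either to vanish or to trap mass away from $\theta$; quantitatively it yields a nonnegative fixed point of $R$ supported on $M$ and distinct from the constant $1$, so $\Phi\not\equiv1$ and $\phi$ cannot generate an MRA.

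The remaining and most delicate implication is $(c)\Rightarrow(b)$: from the absence of blocked sets I must \emph{construct} Cohen's set $E$. Here the Vilenkin structure helps decisively, since $m$ is a generalized Walsh polynomial and therefore depends on only finitely many coordinates, so at each scale the selection of coset representatives is a finite combinatorial problem on the $p$-ary tree. I would build $E$ inductively, at stage $n$ choosing in each $H^{\perp}$-coset a representative cell $U^{*}_{n-1,s}$ along which the partial products $\prod_{k}m(B^{-k}\cdot)$ stay bounded below, while keeping $E$ congruent to $U^{*}$ and containing $U^{*}_{n-1,0}$. The absence of blocked sets is precisely the hypothesis guaranteeing that this greedy selection never gets trapped: were the construction to stall, the collection of cells on which $m$ is forced to vanish along every admissible branch would be $T_{p}$-invariant, would avoid $U^{*}_{n-1,0}$, and would hence be a blocked set, contradicting $(c)$. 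Establishing this dichotomy, that failure of the construction is equivalent to the existence of a blocked set, is the technical heart of the theorem, and I expect it to be the main obstacle.
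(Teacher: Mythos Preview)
The paper does not prove this theorem at all: it is quoted in the preliminaries as a result of Farkov, with the citation \cite{farkov}, and no argument is given. There is therefore nothing in the paper to compare your proposal against.

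That said, your sketch is a reasonable outline of the standard Cohen--Lawton style argument adapted to the Vilenkin setting, and it is in the spirit of how Farkov proves the result in the cited reference. The reduction of $(a)$ to the orthonormality identity $\Phi\equiv 1$ via the Behera--Jahan characterization is clean, and the $(b)\Rightarrow(a)$ step via dominated convergence on the truncations $g_j$ is the right mechanism. Your description of $(a)\Rightarrow(c)$ by contraposition is a bit vague about exactly how a blocked set forces $\Phi\not\equiv 1$; the precise claim is that for $\omega$ in the $H^{\perp}$-saturation of a blocked set one has $\widehat{\phi}(\omega)=0$, which makes $\Phi$ vanish on a set of positive measure. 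For $(c)\Rightarrow(b)$ your greedy tree construction is the correct idea, and your identification of the obstruction---that a stall would produce a blocked set---is the right dichotomy, though in Farkov's argument the finiteness of the Walsh polynomial makes the construction terminate after finitely many steps rather than requiring a genuine inductive limit. If you intend to write this up, that finiteness is where the ``technical heart'' you anticipate actually dissolves.
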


Using the above characterization of refinable function and the matrix extension method Farkov gave an algorithm for the construction of orthonormal wavelets $ \psi_{1},...,\psi_{p-1} $ such that the functions
$$
\psi_{l,j,h}(x)= p^{j/2}{\psi_{l}{(A^{j}x\ominus h)}}, \qquad 1\leq l\leq p-1, j \in \mathbb Z, \; \; h \in H,
$$
form an orthonormal basis of $ L^{2}{(G)} $.

\section{Wavelet sets}

\begin{defi}
A measurable subset $\Omega$ of $G^*$ is called a \textit{wavelet set} if $\psi=\check{1}_{\Omega}$ is a wavelet in $L^{2}(G)$, where $1_{\Omega}$ is the characteristic function on $\Omega$.
\end{defi}

Let $E$ and $F$ be two measurable subsets of $G$. Then $E$ is said to be $H$-translation congruent to $F$, if there exists a partition $\lbrace E_n : n \in N \subseteq  \mathbb Z^+\rbrace$ of $E$ such that $\lbrace E_n\oplus h_{[n]} : h_{[n]} \in H'\rbrace$, where $\lambda(H')=N$, is a partition of $F$. %The map $\lambda$ identifies elements of $H$ and $\mathbb Z^+$ and we will use same notation for the corresponding elements of $\mathbb{Z}^{+}$ and $H$ under the map $\lambda$.

\begin{thm}
Suppose that $S$ is a measurable subset of $G^{*}$ such that $\bigcup_{h \in H^{\perp}}(S\oplus h) = G^{*}$ a.e. Then the following statements are equivalent.
\begin{enumerate}
\item $S \cap (S \oplus h) = \phi$ a.e., whenever $h$ is a non-zero element in $H^{\perp}$.
\item $\mu^*(S)=1$.
\end{enumerate} 
\end{thm}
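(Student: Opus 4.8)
The plan is to reduce both conditions to pointwise statements about the multiplicity function
\[
\sigma(\omega)=\sum_{h\in H^{\perp}}1_{S}(\omega\oplus h),\qquad \omega\in G^{*},
\]
which records how many $H^{\perp}$-translates of $S$ contain $\omega$ (note that $\omega\oplus h\in S$ is equivalent to $\omega\in S\ominus h$, and $\ominus h$ ranges over $H^{\perp}$ as $h$ does). The first thing I would record is that $\sigma$ is $H^{\perp}$-periodic, since replacing $\omega$ by $\omega\oplus h_{0}$ merely reindexes the countable sum over the subgroup $H^{\perp}$; hence it suffices to control $\sigma$ on a single fundamental domain. For this I would use that $U^{*}$ is a transversal for $G^{*}/H^{\perp}$: each $\omega=(\omega_{j})$ splits uniquely into the part supported on $j>0$ (which lies in $U^{*}$) and the part supported on $j\le 0$ (which lies in $H^{\perp}$). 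Consequently $\{U^{*}\oplus h:h\in H^{\perp}\}$ partitions $G^{*}$ up to a null set, and by translation invariance together with the normalization $\mu^{*}(U^{*})=1$ each piece satisfies $\mu^{*}(U^{*}\oplus h)=1$.

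Next I would translate the two hypotheses into bounds on $\sigma$. The covering assumption $\bigcup_{h\in H^{\perp}}(S\oplus h)=G^{*}$ a.e.\ says precisely that $\sigma(\omega)\ge 1$ for a.e.\ $\omega$. Statement (1), that $S\cap(S\oplus h)$ is null for every nonzero $h\in H^{\perp}$, is equivalent to the translates $\{S\oplus h\}_{h\in H^{\perp}}$ being pairwise essentially disjoint, which in turn is equivalent to $\sigma(\omega)\le 1$ for a.e.\ $\omega$; here I would invoke that $H^{\perp}$ is countable, so that the union of the countably many null overlaps is still null. Thus, under the covering hypothesis, statement (1) holds if and only if $\sigma=1$ a.e.

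The bridge to statement (2) is obtained by integrating $\sigma$ over the fundamental domain. Applying Tonelli's theorem (every term is nonnegative) and then the tiling and translation invariance from the first paragraph,
\[
\int_{U^{*}}\sigma\,d\mu^{*}=\sum_{h\in H^{\perp}}\int_{U^{*}}1_{S}(\omega\oplus h)\,d\mu^{*}(\omega)=\sum_{h\in H^{\perp}}\mu^{*}\big(S\cap(U^{*}\oplus h)\big)=\mu^{*}(S).
\]
Since $\int_{U^{*}}1\,d\mu^{*}=\mu^{*}(U^{*})=1$ and $\sigma\ge 1$ a.e.\ on $U^{*}$, the quantity $\int_{U^{*}}(\sigma-1)\,d\mu^{*}=\mu^{*}(S)-1$ is the integral of a nonnegative function. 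For (1)$\Rightarrow$(2), the bound $\sigma\le 1$ from the previous step forces $\sigma=1$ a.e., whence $\mu^{*}(S)=\int_{U^{*}}\sigma\,d\mu^{*}=1$. For (2)$\Rightarrow$(1), the equality $\mu^{*}(S)=1$ makes $\int_{U^{*}}(\sigma-1)\,d\mu^{*}=0$ with $\sigma-1\ge 0$, so $\sigma=1$ a.e.\ on $U^{*}$ and, by periodicity, a.e.\ on $G^{*}$, which is exactly the essential disjointness of the translates, i.e.\ statement (1).

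I expect the only genuinely substantive points to be the fundamental-domain/tiling property of $U^{*}$ under $H^{\perp}$ and the careful equivalence between the set-theoretic disjointness in (1) and the pointwise inequality $\sigma\le 1$, where one must handle a countable union of null sets; the remaining steps are a routine application of Tonelli's theorem and the vanishing of the integral of a nonnegative function.
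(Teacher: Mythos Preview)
Your proof is correct and follows essentially the same approach as the paper: both introduce the multiplicity function $\sigma(\omega)=\sum_{h\in H^{\perp}}1_{S}(\omega\oplus h)$, use the covering hypothesis to get $\sigma\ge 1$, identify (1) with $\sigma\le 1$, and compute $\int_{U^{*}}\sigma\,d\mu^{*}=\mu^{*}(S)$ to link both statements. Your write-up is somewhat more explicit about the fundamental-domain property of $U^{*}$, the invocation of Tonelli, and the countability of $H^{\perp}$, but there is no substantive difference in strategy.
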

	
\begin{proof}
Let $f(\omega)=\sum_{h \in H^{\perp}}1_{S}(\omega \ominus h)$. Then if $S$ satisfies property (1), it follows that $f\equiv 1$ and, we may write
\begin{align*}
\mu^*(S)&=\int_{G^{*}}1_{S}(\omega)d\omega\\
%		&=\int_{\bigcup_{h \in H^{\perp}}(U^{*}\oplus h)}1_{S}(\omega)d\omega\\
%		&=\int_{U^{*}}\sum_{h \in H^{\perp}}1_{S}(\omega \ominus h)d\omega\\
%		&=\int_{U^{*}}f(\omega)d\omega\\
%		&=\int_{U^{*}}d\omega\\
		&=1.
		\end{align*}
For converse suppose that (2) holds, assumption on $S$ implies that $f(\omega)\geq 1$. Also observe that, 
\begin{align*}
\int_{U^{*}}f(\omega)d\omega &=\int_{U^{*}}\sum_{h \in H^{\perp}}1_{S}(\omega \ominus h)d\omega\\
%		                     &=\sum_{h \in H^{\perp}}\int_{(U^{*}\oplus h)}1_{S}(\omega)d\omega\\
%		                     &=\int_{G^{*}}1_{S}(\omega)d\omega\\
		                     &=\mu^*(S)=1
\end{align*}
This implies $f \equiv 1$ and thus $S \cap (S \oplus h) = \phi$ a.e.
\end{proof}

In \cite{bened1,bened2} the authors considered a locally compact abelian group (LCAG) $G$, having a compact open subgroup $H$. The difference between the wavelet theory developed on $\mathbb R^n$ and other similar type of structures, and the one given on LCAG in \cite{bened1} is that in the former elements of a non-trivial discrete subgroup are used for translations, while a LCAG  may not possess such type of subgroup. To overcome this difficulty in \cite{bened1} an operator $\tau_{[s]}$ was constructed for each element $[s]$ of the discrete quotient group $G/H$. These operators were determined by a choice $\mathcal D$ of coset representatives in the dual group $\hat{G}$ of $G$, for $\hat{G}/H^{\perp}$. Elements of a  countable non-empty subset $\mathcal A$ of Aut$(G)$, the group under composition of homeomorphic automorphisms of $G$, were used for dilations. Then similar to the wavelet sets on Euclidean spaces they defined translation and dilation congruences and gave characterizeation of multiwavelet sets.

On the same lines we have given characterization for single wavelet sets on Vilenkin group $G$. Similar result holds for multiwavelet sets. However, in case of Vilenkin group the elements of $H$ in $G$, and $H^{\perp}$ in $G^*$ are used for translation and instead of using an arbitrary countable set of automorphisms on $G$, integral powers of the automorphisms $A$ and $B$ are used for dilations in $G$ and $G^*$, respectively.

\begin{thm}
Let $ G $ be the Vilenkin group and $ H $ be the discrete subgroup of $ G $. Let $ U^{*} $ be the neighborhood of $ \theta $ in $ G^{*} $. Let $ \Omega $ be a measurable subset of $ G^{*} $. Then, $ \Omega $ is a wavelet set if and only if both of the following conditions hold:\\
(a) $\lbrace B^{n}\Omega : n \in \mathbb Z\rbrace$ tiles $G^{*}$ upto sets of measure zero, and\\
(b) $\Omega $ is $H^{\perp}$-translation congruent to $U^{*}$ upto sets of measure zero.
\end{thm}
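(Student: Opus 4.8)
The plan is to pass the entire problem to the frequency side via the Plancherel theorem for $G$ and to exploit that $\widehat{\psi}=1_\Omega$. Since the Fourier transform is unitary from $L^2(G)$ onto $L^2(G^*)$, the family $\{\psi_{j,h}:j\in\mathbb{Z},\,h\in H\}$ is an orthonormal basis of $L^2(G)$ if and only if $\{\widehat{\psi_{j,h}}:j\in\mathbb{Z},\,h\in H\}$ is an orthonormal basis of $L^2(G^*)$. First I would record the transform of a dilated and translated copy of $\psi$: using $(Ax)_j=x_{j+1}$, the intertwining of $A$ with $B$ through $\chi$, and the Haar-measure scaling, one finds that $\widehat{\psi_{j,h}}(\omega)=p^{-j/2}\,\overline{\chi(h,B^{-j}\omega)}\,1_\Omega(B^{-j}\omega)$, so that $\widehat{\psi_{j,h}}$ is supported on $B^{j}\Omega$ and, on that set, is a constant multiple of a character. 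This single identity is what lets the two conditions decouple: (b) will govern the behaviour inside a fixed dilation level, while (a) will govern how the levels fit together.

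Next I would treat condition (b). Fix $j$ and regard $\{\widehat{\psi_{j,h}}:h\in H\}$ as a system in $L^2(B^j\Omega)$. Because dilation by $B$ is a measure-scaling automorphism, it suffices to analyse the level $j=0$, where the functions are exactly the restrictions of the characters $\chi(h,\cdot)$ to $\Omega$. I claim these form an orthonormal basis of $L^2(\Omega)$ precisely when $\Omega$ is $H^\perp$-translation congruent to $U^*$. To see this I would transport the problem to $U^*$ along the congruence: if $\{\Omega_n\}$ is the partition of $\Omega$ with $\{\Omega_n\oplus\omega_{[n]}\}$ a partition of $U^*$, then translation by an element of $H^\perp$ multiplies a character $\chi(h,\cdot)$ only by the unimodular constant $\chi(h,\omega_{[n]})$, so the characters on $\Omega$ are an orthonormal basis exactly when their images $\{W_\alpha^*\}$ are an orthonormal basis of $L^2(U^*)$, which is the known completeness of the generalized Walsh system. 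Separately, orthonormality is equivalent to the packing $\Omega\cap(\Omega\oplus h)=\emptyset$ a.e.\ for $h\in H^\perp\setminus\{\theta\}$, and completeness to the covering $\bigcup_{h\in H^\perp}(\Omega\oplus h)=G^*$ a.e.; by the preceding theorem these two together are equivalent to $\mu^*(\Omega)=1$ together with a tiling, i.e.\ to the translation congruence in (b).

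I would then treat condition (a). Having fixed the within-level structure, the subspace $\overline{\operatorname{span}}\{\widehat{\psi_{j,h}}:h\in H\}$ is exactly the space of functions in $L^2(G^*)$ supported on $B^j\Omega$. Two such subspaces for $j\neq j'$ are mutually orthogonal if and only if $B^j\Omega\cap B^{j'}\Omega$ is null, and their closed sum is all of $L^2(G^*)$ if and only if $\bigcup_{j}B^j\Omega=G^*$ a.e.; together these two statements say precisely that $\{B^n\Omega:n\in\mathbb{Z}\}$ tiles $G^*$ up to null sets, which is (a). Assembling the pieces, $\{\widehat{\psi_{j,h}}\}$ is an orthonormal basis of $L^2(G^*)$ if and only if each level is an orthonormal basis of its support space (condition (b)) and the levels decompose $L^2(G^*)$ orthogonally (condition (a)); pulling back through Plancherel gives the theorem.

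The step I expect to be the main obstacle is the careful verification of the transform formula $\widehat{\psi_{j,h}}(\omega)=p^{-j/2}\,\overline{\chi(h,B^{-j}\omega)}\,1_\Omega(B^{-j}\omega)$, since it requires pinning down the exact way $A$ and $B$ interact under $\chi$ and the correct power of $p$ coming from the normalization $\mu(U)=1$; every subsequent reduction hinges on this identity. A secondary but genuine difficulty is the rigorous passage between the three formulations of condition (b) — packing together with covering, $\mu^*(\Omega)=1$ with tiling (via the preceding theorem), and $H^\perp$-translation congruence to $U^*$ — keeping all the ``almost everywhere'' qualifications consistent and justifying the interchange of summation and integration in the periodization.
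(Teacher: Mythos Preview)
Your proposal is correct and is precisely the standard Fourier-side argument the paper is invoking: the paper does not write out a proof for this theorem but states it ``on the same lines'' as Benedetto--Benedetto \cite{bened1,bened2}, and your Plancherel reduction, the transform identity $\widehat{\psi_{j,h}}(\omega)=p^{-j/2}\overline{\chi(h,B^{-j}\omega)}\,1_\Omega(B^{-j}\omega)$, and the decoupling into within-level orthonormal bases (giving~(b)) and across-level orthogonal decomposition (giving~(a)) is exactly that framework specialised to the Vilenkin setting. One minor simplification you could note: once you have orthonormality at level $j=0$, the periodization identity $\sum_{h\in H^\perp}1_\Omega(\omega\oplus h)=1$ already forces $H^\perp$-translation congruence to $U^*$, so completeness comes for free and you need not treat it separately.
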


\begin{thm}
Let $ G $ be the Vilenkin group and $ H $ be the discrete subgroup of $ G $. Let $ U^{*} $ be the neighborhood of $ \theta $ in $ G^{*} $. Let $\lbrace \Omega_i \rbrace $, $1 \leq i \leq p-1$, be measurable subsets of $ G^{*} $. Then, $ \lbrace \Omega_1, \Omega_2, \cdots, \Omega_{p-1} \rbrace$ forms a multiwavelet set if and only if both of the following conditions hold:\\
(a) $ \lbrace B^{n}\Omega_i : n \in \mathbb Z, 1 \leq i \leq p-1\rbrace $ tiles $ G^{*} $ upto sets of measure zero, and\\
(b) Each $ \Omega_i $, $1 \leq i \leq p-1$, is $H^{\perp}$-translation congruent to $ U^{*} $ upto sets of measure zero.
\end{thm}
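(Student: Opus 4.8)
The plan is to transfer everything to the dual group $G^*$ via Plancherel and to exploit $\widehat{\psi_l}=1_{\Omega_l}$, exactly as in the single wavelet case of the preceding theorem; the only new feature is an extra index $l$ ranging over $1\le l\le p-1$, which I will simply carry through all the sums. First I would record the transformation rules: dilation by $A^j$ on $G$ becomes dilation by $B^j$ on $G^*$, and translation by $h\in H$ becomes multiplication by the character $\overline{\chi(h,\cdot)}$, so that $\widehat{\psi_{l,j,h}}(\omega)=p^{-j/2}\,\overline{\chi(A^{-j}h,\omega)}\,1_{B^{j}\Omega_l}(\omega)$. Since an orthonormal system is a basis precisely when it is complete, I would split the argument into an orthonormality part, which I match to (b) together with the disjointness half of (a), and a completeness part, which I match to the covering half of (a).

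For orthonormality I would periodize over $H^\perp$ and use that $\{W_\alpha^*\}$ is an orthonormal basis of $L^2(U^*)$, together with $\chi(h,k)=1$ for $h\in H$, $k\in H^\perp$. Fixing $j$ and $l$, the translates $\{\psi_{l,j,h}:h\in H\}$ are orthonormal if and only if the periodization $\sum_{k\in H^\perp}1_{\Omega_l}(\eta\oplus k)$ equals $1$ for a.e. $\eta\in U^*$, which is exactly the assertion that $\Omega_l$ is $H^\perp$-translation congruent to $U^*$ with $\mu^*(\Omega_l)=1$; this yields (b). For two distinct pairs $(j,l)\neq(j',l')$, the vanishing of every inner product $\langle\psi_{l,j,h},\psi_{l',j',h'}\rangle$ forces $\mu^*(B^{j}\Omega_l\cap B^{j'}\Omega_{l'})=0$, so the family $\{B^{n}\Omega_i:n\in\mathbb Z,\,1\le i\le p-1\}$ is pairwise disjoint up to null sets.

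For completeness I would verify the Parseval identity $\sum_{l,j,h}|\langle f,\psi_{l,j,h}\rangle|^2=\|f\|_2^2$ for every $f\in L^2(G)$. Summing over $h$ for fixed $(l,j)$ and invoking the congruence (b) (applied after dilation by $B^j$) collapses the inner sum to $\int_{B^{j}\Omega_l}|\widehat f(\omega)|^2\,d\mu^*(\omega)$, and then summing over $l$ and $j$ gives
\[
\sum_{l,j,h}|\langle f,\psi_{l,j,h}\rangle|^2=\int_{G^*}|\widehat f(\omega)|^2\Bigl(\sum_{l,j}1_{B^{j}\Omega_l}(\omega)\Bigr)\,d\mu^*(\omega).
\]
This equals $\|\widehat f\|_2^2=\|f\|_2^2$ for all $f$ if and only if $\sum_{l,j}1_{B^{j}\Omega_l}=1$ a.e., i.e. the sets $B^{n}\Omega_i$ cover $G^*$. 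Combined with the disjointness obtained above, this is precisely the tiling condition (a). Reading the resulting chain of equivalences in both directions proves the theorem.

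The step I expect to be the main obstacle is the cross-scale orthonormality in the forward direction. In contrast with same-scale comparisons, translations at scales $j$ and $j'$ correspond to modulations by characters dual to the \emph{finer} group $A^{-(j-j')}H$, for which $\chi(A^{-n}h,k)$ need not equal $1$ on $k\in H^\perp$, so the naive $H^\perp$-periodization does not collapse and one cannot directly read off $\mu^*(B^{j}\Omega_l\cap B^{j'}\Omega_{l'})=0$. I would resolve this by reducing to $j'=0$ via the unitarity of dilation, first establishing (b) from the same-scale computation, and then periodizing the overlap $1_{B^{n}\Omega_l\cap\Omega_{l'}}$ over the finer annihilator $B^{n}H^\perp$ on the fundamental domain $B^{n}U^*$, where the congruence of $B^n\Omega_l$ to $B^nU^*$ and completeness of the Walsh system again force nullity. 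A secondary point specific to the $p-1$ functions is internal consistency: (a) at level $n=0$ makes $\Omega_1,\dots,\Omega_{p-1}$ pairwise disjoint while (b) gives each measure $1$, and one should check these fit together with $\bigcup_{n,i}B^{n}\Omega_i=G^*$, of which the single wavelet theorem is the $p=2$ special case.
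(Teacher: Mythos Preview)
The paper does not actually supply a proof of this theorem: it is stated immediately after the single-wavelet Theorem~3.3 (itself stated without proof) with the remark that a ``similar result holds for multiwavelet sets,'' and the reader is implicitly referred to the Benedetto--Benedetto framework cited just above. Your proposal therefore gives considerably more detail than the paper, and the overall strategy---Plancherel, same-scale periodization for (b), Parseval for the tiling---is the standard one and is correct.

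The step you single out as the ``main obstacle'' is, however, not an obstacle at all. Because $\widehat{\psi_l}=1_{\Omega_l}$ is nonnegative, you may simply take $h=h'=\theta$ in the cross-term: by your own formula $\widehat{\psi_{l,j,\theta}}=p^{-j/2}\,1_{B^{j}\Omega_l}$, so for $(l,j)\neq(l',j')$
\[
0=\langle\psi_{l,j,\theta},\psi_{l',j',\theta}\rangle
= p^{-(j+j')/2}\int_{G^*}1_{B^{j}\Omega_l}(\omega)\,1_{B^{j'}\Omega_{l'}}(\omega)\,d\mu^*(\omega)
= p^{-(j+j')/2}\,\mu^*\bigl(B^{j}\Omega_l\cap B^{j'}\Omega_{l'}\bigr),
\]
and disjointness follows at once; no periodization over a finer annihilator $B^{n}H^{\perp}$ is needed. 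Alternatively, once (b) is established from the same-scale computation, your Parseval identity already yields $\sum_{l,j}1_{B^{j}\Omega_l}=1$ a.e., which simultaneously encodes covering \emph{and} pairwise disjointness, making the separate cross-scale argument redundant. Either way, the worry you raise about $\chi(A^{-n}h,k)$ not being trivial on $H^{\perp}$ does not arise.
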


\section{Generalized scaling sets}

Throughout this section we will denote the elements of $G^*$ as $(\cdots \omega_{-2} \omega_{-1} \omega_0.\omega_1 \omega_2 \cdots)$. In the beginning some results related to the translation map $\rho$ are given as in \cite{papadakis}. Thereafter generalized scaling sets are introduced on the Vilenkin group.

Let us define a map $\rho:G^{*}\rightarrow U^{*}$ such that $\omega=\rho(\omega)\oplus l(\omega)$, for some $l(\omega)\in H^{\perp}$. It can be seen easily that $\rho$ is not one-one, but it is onto. Here $\rho_{|U^{*}}$ is the identity map. Let  $M$ be a measurable subset of $G^{*}$, then $M$ is $H^{\perp}-$ translation congruent to $U^{*}$ iff $\rho_{|M}$ is a bijection between $M$ and $U^{*}$.  
	
Let us consider a map $I:U^{*}\rightarrow U^{*}$ defined by
$$
I(\omega)=\left\{\begin{matrix}
B\omega, & \omega \in U^{*}_{1}\\
B(\omega \oplus 0.p-\sigma), & \omega \in U^{*}\setminus U^{*}_{1},
\end{matrix}\right.
$$
where $\sigma \in \{1,2,...,p-1\}$ is the 1-th coordinate of $\omega \in U^{*}\setminus U^{*}_{1}$.
Clearly, the map $I$ is onto but not one-one. 
%For non-injectivity, let $\omega_{1} \in U^{*}_{1}$ and $\omega_{2}\in U^{*}\setminus U^{*}_{1}$ such that $\omega_{1} \text{ and } \omega_{2}$ differ only at 1-th coordinate, that is, $(\omega_1)_{1} \neq (\omega_2)_{1}$. If $(\omega_2)_{1}=\sigma$, then $I(\omega_{1})=B(\omega_{1})=B(\omega_{2} \oplus 0.p-\sigma)=I(\omega_{2})$.

Note that, $I^{j}:U^{*} \rightarrow U^{*}$, for all $j \geq 0$, where $I^{0}$ is the identity map. Furthermore, it follows from the definition of $I$ that
$$
I^{j}(\omega)=B^{j}\omega \oplus l(\omega), \text{ for some }l(\omega) \in H^{\perp}, \;\; \text{and } j=0,1,2,\cdots
$$
Since $I^{j}(\omega) \in U^{*} $ and $\rho$ is $H^{\perp}-$ periodic, therefore
$$
I^{j}(\omega)=\rho(I^{j}(\omega))=\rho(B^{j}\omega \oplus l(\omega))=\rho(B^{j}\omega).
$$

\begin{lem}
For	$\omega_{1},\omega_{2} \in U^{*}$, $I(\omega_{1})=I(\omega_{2})$ iff one of the following conditions is true.\\
i. $\omega_{1} \in U^{*}\setminus U^{*}_{1}$, $\omega_{2} \in U^{*}_{1}$ and $\omega_{1} \ominus \omega_{2}=0.\sigma$, or $\omega_{1}=\rho(\omega_{2}\oplus 0.\sigma)$, where $\sigma=(\omega_{1})_{1}$.\\
ii. $\omega_{1},\omega_{2} \in U^{*}_{1}$ and $\omega_{1}=\omega_{2}$.\\
iii. $\omega_{1},\omega_{2} \in U^{*}\setminus U^{*}_{1}$ and they differ only at the 1-th coordinate.
\end{lem}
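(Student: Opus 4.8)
The plan is to collapse the two-branch definition of $I$ into a single coordinate formula and then let the three cases of the lemma fall out of a case split on one coordinate. First I would show that, writing $\omega=(0.\omega_1\omega_2\omega_3\cdots)\in U^*$, the map $I$ acts in both branches as the one-sided left shift that deletes the position-$1$ coordinate, namely $I(\omega)=(0.\omega_2\omega_3\omega_4\cdots)$. On $U_1^*$ this is immediate: there $\omega_1=0$ and $I(\omega)=B\omega$, and since $B$ is the coordinate shift $(B\omega)_j=\omega_{j+1}$ we obtain $(0.\omega_2\omega_3\cdots)$. On $U^*\setminus U_1^*$, with $\sigma=\omega_1\neq 0$, the point is that $\oplus$ is coordinatewise addition modulo $p$ with no carrying, so $\omega\oplus 0.(p-\sigma)$ alters only the position-$1$ coordinate, turning $\sigma$ into $\sigma+(p-\sigma)\equiv 0 \pmod p$; applying $B$ then yields $(0.\omega_2\omega_3\cdots)$ as well.

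With this uniform description in hand, the core equivalence is transparent: for $\omega_1,\omega_2\in U^*$ one has $I(\omega_1)=I(\omega_2)$ if and only if $(\omega_1)_j=(\omega_2)_j$ for every $j\geq 2$, i.e. if and only if $\omega_1$ and $\omega_2$ agree in all coordinates except possibly at position $1$. Everything after this is bookkeeping on the single position-$1$ coordinate.

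Next I would case-split on the pair $\big((\omega_1)_1,(\omega_2)_1\big)$. If both coordinates vanish, then $\omega_1,\omega_2\in U_1^*$, and agreement off position $1$ forces $\omega_1=\omega_2$, giving (ii). If both are nonzero, then $\omega_1,\omega_2\in U^*\setminus U_1^*$, and the equivalence says exactly that they differ only at position $1$, giving (iii). If exactly one vanishes, say $(\omega_2)_1=0$ and $(\omega_1)_1=\sigma\neq 0$, then $\omega_1\in U^*\setminus U_1^*$, $\omega_2\in U_1^*$, and $\omega_1\ominus\omega_2$ has $\sigma$ at position $1$ and $0$ elsewhere, i.e. $\omega_1\ominus\omega_2=0.\sigma$; equivalently $\omega_1=\omega_2\oplus 0.\sigma=\rho(\omega_2\oplus 0.\sigma)$, the last equality holding because $\omega_2\oplus 0.\sigma$ already lies in $U^*$, on which $\rho$ is the identity. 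This is (i), and the opposite assignment of the vanishing coordinate is symmetric. The converse implications are immediate, since each of (i)--(iii) forces $(\omega_1)_j=(\omega_2)_j$ for all $j\geq 2$ and hence $I(\omega_1)=I(\omega_2)$ by the formula.

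I expect the only genuinely delicate step to be the first one, namely establishing the uniform shift formula for $I$: there one must use the carry-free nature of addition in $G^*$ to be sure that $\omega\oplus 0.(p-\sigma)$ touches no coordinate other than position $1$, and must correctly identify $B$ as the left shift so that both branches land back in $U^*$. Once that normalization is secured, the remainder is an exhaustive but routine analysis of one coordinate, and the only care needed is to record that case (i) together with its mirror image exhausts the ``exactly one zero'' situation.
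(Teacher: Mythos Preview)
Your proof is correct and, if anything, more transparent than the paper's own argument. The paper proceeds by direct computation: it writes out $I(\omega_1)$ and $I(\omega_2)$ in the mixed case $\omega_1\in U^*\setminus U_1^*$, $\omega_2\in U_1^*$, equates them to extract $\omega_1\ominus\omega_2=0.\sigma$, and then dismisses cases (ii), (iii) and the entire converse direction as trivial or easy. Your key extra step---collapsing both branches of the definition into the single formula $I(\omega)=(0.\omega_2\omega_3\cdots)$ via the carry-free addition---is precisely what makes those ``trivial'' assertions genuinely immediate; the paper is implicitly relying on the same fact without isolating it. So the two arguments are at bottom the same computation, but your normalization of $I$ as the one-sided shift that deletes position~$1$ organizes the case split more cleanly and makes the converse automatic rather than left to the reader.
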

	
\begin{proof}
Suppose that $\omega_{1},\omega_{2} \in U^{*}$ with  $I(\omega_{1})=I(\omega_{2})$. If $\omega_{1} \in U^{*}\backslash U^{*}_{1}$ and $\omega_{2} \in U^{*}_{1}$,  then  $I(\omega_{1})=B\omega_{1} \oplus p-\sigma.0$ and $I(\omega_{2})=B\omega_{1}$. Since $I(\omega_{1})=I(\omega_{2})$, $B\omega_{1} \oplus p-\sigma.0=B\omega_{2}$ implies that $\omega_{1} \ominus \omega_{2}=0.\sigma$. Thus condition (i) holds. Statements (ii) and (iii) follow trivially. Further, it is easy to see that if any one of the three conditions holds, then $I(\omega_{1})=I(\omega_{2})$.
\end{proof}

\begin{thm}
Suppose that $\mathcal{U}$ is a measurable subset of $U^{*}$ satisfying the condition $\mathcal{U}^{c}=U^{*}\setminus \mathcal{U}=\bigcup_{\sigma=1}^{p-1}\rho(\mathcal{U} \oplus 0.\sigma)$. Let $\Upsilon_{0}=\mathcal{U}$ and $\Upsilon_{n}=\mathcal{U}\cap I^{-1}(\Upsilon_{n-1})$, for $n \geq 1$. Then, the set $B^{(n+1)}\Upsilon_{n}$ is $H^{\perp}-$ translation congruent to $U^{*}$, for $n \geq 0$.
\end{thm}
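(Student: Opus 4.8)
The plan is to translate the conclusion into a statement about the shift map $I$ and then argue by induction on $n$. By the criterion recalled before the theorem, $B^{n+1}\Upsilon_{n}$ is $H^{\perp}$-translation congruent to $U^{*}$ precisely when $\rho$ restricted to $B^{n+1}\Upsilon_{n}$ is a bijection onto $U^{*}$ up to null sets. Since $\rho(B^{n+1}\omega)=I^{n+1}(\omega)$ for every $\omega\in U^{*}$ and $B^{n+1}$ is an automorphism, this is equivalent to showing that $I^{n+1}$ restricted to $\Upsilon_{n}$ is a bijection onto $U^{*}$ up to sets of measure zero. Unwinding the recursion $\Upsilon_{n}=\mathcal{U}\cap I^{-1}(\Upsilon_{n-1})$ gives $\Upsilon_{n}=\bigcap_{j=0}^{n}I^{-j}(\mathcal{U})$, i.e. $\omega\in\Upsilon_{n}$ iff $I^{j}(\omega)\in\mathcal{U}$ for $0\le j\le n$; this is the form I shall use.

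First I would extract the content of the hypothesis on $\mathcal{U}$. Because each $\mathcal{U}\oplus 0.\sigma$ already lies in $U^{*}$, one has $\rho(\mathcal{U}\oplus 0.\sigma)=\mathcal{U}\oplus 0.\sigma$, so the assumption reads $U^{*}\setminus\mathcal{U}=\bigcup_{\sigma=1}^{p-1}(\mathcal{U}\oplus 0.\sigma)$. Translating by $\ominus 0.\sigma$ and using that a nonzero shift sends $\mathcal{U}$ into $\mathcal{U}^{c}$, I would deduce that the $p$ sets $\{\mathcal{U}\oplus 0.\sigma : 0\le\sigma\le p-1\}$ are pairwise disjoint and cover $U^{*}$, hence partition it up to measure zero; in particular $\mu^{*}(\mathcal{U})=1/p$. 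This yields the base case $n=0$, the transversal lemma: $I|_{\mathcal{U}}\colon\mathcal{U}\to U^{*}$ is a bijection a.e. Indeed, two points of $\mathcal{U}$ with the same image under $I$ differ only in their first coordinate, say by $0.\tau$, so if distinct they would witness $\mathcal{U}\cap(\mathcal{U}\oplus 0.\tau)\neq\emptyset$ with $\tau\neq 0$, contradicting disjointness; thus $I|_{\mathcal{U}}$ is injective. For surjectivity one applies $I$ to the covering $U^{*}=\bigcup_{\sigma=0}^{p-1}(\mathcal{U}\oplus 0.\sigma)$ and uses $I(\mathcal{U}\oplus 0.\sigma)=I(\mathcal{U})$ together with $I(U^{*})=U^{*}$, or equivalently observes that injectivity and $\mu^{*}(\mathcal{U})=1/p$ force $\mu^{*}(I(\mathcal{U}))=1$.

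For the inductive step I would assume that $I^{n}$ restricted to $\Upsilon_{n-1}$ is a bijection onto $U^{*}$ and prove the same for $I^{n+1}$ on $\Upsilon_{n}$ by factoring $I^{n+1}|_{\Upsilon_{n}}=\bigl(I^{n}|_{\Upsilon_{n-1}}\bigr)\circ\bigl(I|_{\Upsilon_{n}}\bigr)$, which is legitimate because $\Upsilon_{n}=\mathcal{U}\cap I^{-1}(\Upsilon_{n-1})$ guarantees $I(\Upsilon_{n})\subseteq\Upsilon_{n-1}$. It therefore suffices to check that $I|_{\Upsilon_{n}}\colon\Upsilon_{n}\to\Upsilon_{n-1}$ is a bijection a.e. Injectivity is immediate from the transversal lemma since $\Upsilon_{n}\subseteq\mathcal{U}$. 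For surjectivity, given $\eta\in\Upsilon_{n-1}\subseteq U^{*}$ the transversal lemma supplies a unique $\omega\in\mathcal{U}$ with $I(\omega)=\eta$; since $\eta\in\Upsilon_{n-1}$ this $\omega$ lies in $\mathcal{U}\cap I^{-1}(\Upsilon_{n-1})=\Upsilon_{n}$, so $\eta$ is attained. Composing the two bijections then gives the claim for $n$.

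The substance of the argument is concentrated in the base case, and the step I expect to be the main obstacle is the passage from the congruence hypothesis on $\mathcal{U}$ to the transversal lemma at the measure-theoretic level: one must verify that the hypothesis genuinely forces the $p$ translates of $\mathcal{U}$ to tile $U^{*}$, not merely to cover it, and then control null sets when asserting that $I|_{\mathcal{U}}$ is onto a.e. Here I would use that $B$ scales $\mu^{*}$ by $p$ and that $\rho$, being a piecewise $H^{\perp}$-translation, preserves measure on any set where it is injective, so that $\mu^{*}(I(\mathcal{U}))=\mu^{*}(\mathcal{U})\cdot p=1$. Once the base case is secured, the induction and the final translation back through the identity $\rho(B^{n+1}\omega)=I^{n+1}(\omega)$ are purely formal.
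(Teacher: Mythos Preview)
Your proposal is correct and follows essentially the same route as the paper: both arguments first establish the ``transversal lemma'' that $I|_{\mathcal{U}}:\mathcal{U}\to U^{*}$ is a bijection (the paper calls this map $\gamma$), then prove by induction that $I^{n+1}|_{\Upsilon_{n}}:\Upsilon_{n}\to U^{*}$ is a bijection, and finally translate back via $\rho|_{B^{n+1}\Upsilon_{n}}=I^{n+1}\circ B^{-(n+1)}$. Your write-up is in fact more explicit than the paper's at the two places that carry the content: you spell out why the hypothesis forces the $p$ translates $\{\mathcal{U}\oplus 0.\sigma\}_{\sigma=0}^{p-1}$ to be pairwise disjoint (not merely to cover $U^{*}$), and you keep track of the null sets and the measure scaling $\mu^{*}(I(\mathcal{U}))=p\,\mu^{*}(\mathcal{U})$, whereas the paper simply asserts that the assumption makes $\gamma$ one-one and onto.
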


\begin{proof}
Here we have to prove that $B^{(n+1)}\Upsilon_{n}$ is $H^{\perp}-$ translation congruent to $U^{*}$, for which it is enough to show that $\rho_{| B^{(n+1)}\Upsilon_{n}}$ is bijection between $B^{(n+1)}\Upsilon_{n}$ and $U^{*}$. Note that $I$ maps $U^{*}$ onto $U^{*}$ and each $\omega^{'} \in U^{*}$ is the image of $p$ points, $\omega$ and $\rho(\omega \oplus 0.\sigma)$, $\sigma \in \lbrace 1, 2, \cdots, p-1\rbrace$ in $U^{*}$ under the map $I$. By our assumption, if $\omega \in \mathcal{U}$, then $\bigcup_{\sigma=1}^{p-1}\rho(\omega \oplus 0.\sigma) \in U^{*} \setminus \mathcal{U} $. Thus, $\gamma=I_{|\mathcal{U}}:\mathcal{U}\rightarrow U^{*}$ is one-one and onto.

Clearly, $\gamma^{-1}\mathcal{U} \subset \mathcal{U}$ and thus,
$$
\Upsilon_{1}=\mathcal{U}\cap I^{-1}(\Upsilon_{0})\\
%=\mathcal{U}\cap I^{-1}(\mathcal{U})\\
%=\mathcal{U}\cap \gamma^{-1}(\mathcal{U})\\
=\gamma^{-1}(\mathcal{U})
$$

This map $\gamma_{|\Upsilon_{1}}:\Upsilon_{1}\rightarrow \mathcal{U}$ is one-one and onto because $\gamma$ is one-one. Let $\gamma^{2}=\gamma o\gamma_{|\Upsilon_{1}}:\Upsilon_{1}\rightarrow U^{*}$. Thus $\gamma^{2}$ is one-one, onto and $\Upsilon_{2}=\gamma^{-1}(\Upsilon_{1})$.

Again suppose that,
$$
\Upsilon_{k}=\mathcal{U} \cap I^{-1}(\Upsilon_{k-1})=\gamma^{-k}(\mathcal{U})
$$
then,
$$
\Upsilon_{k+1}=
%\mathcal{U} \cap I^{-1}(\Upsilon_{k})=\mathcal{U} \cap \gamma^{-1}(\gamma^{-k}(\mathcal{U}))
%=\mathcal{U}\cap \gamma^{-(k+1)}(\mathcal{U})=\gamma^{-(k+1)}(\mathcal{U})=
\gamma^{-1}(\Upsilon_{k}).
$$
Thus by induction, we have $\Upsilon_{n}=\gamma^{-1}(\Upsilon_{n-1})=\gamma^{-n}(\mathcal{U})$ and
\begin{eqnarray}
\gamma o\gamma_{|\Upsilon_{n}}^{n}=\gamma^{n+1}:\Upsilon_{n}\rightarrow U^{*}
\end{eqnarray}
is one-one and onto.
		
Since (5) is true for each $n \geq 0$, fix $n \in \mathbb{N} \cup \{0\}$ and let $\beta:B^{(n+1)}\Upsilon_{n}\rightarrow \Upsilon_{n}$ be the one-one and onto map defined by $\beta(\omega)=B^{-(n+1)}\omega$.

%By the equality $I^{j}(\omega)=\rho(B^{j}\omega)$, for $\omega \in U^{*}$ and $j=0,1,2,...$, if $\omega \in \Upsilon_{n}$, 
%$$
%\rho(B^{(n+1)}(\omega))=I^{n+1}(\omega)=\gamma^{n+1}(\beta(B^{(n+1)}\omega)).
%$$
Therefore,
\begin{eqnarray*}
\rho_{|B^{(n+1)}\Upsilon{n}}=\gamma^{n+1}o\beta
\end{eqnarray*}
		
Hence $\rho_{| B^{(n+1)}\Upsilon_{n}}:B^{(n+1)}\Upsilon_{n}\rightarrow U^{*}$ is one-one, onto and thus, $B^{(n+1)}\Upsilon_{n}$ is $H^{\perp}-$ translation congruent to $U^{*}$.
\end{proof}

\begin{defi}
A measurable subset $S$ of $G^{*}$ is called a \textit{generalized scaling set} if $\mu^*(S)=\dfrac{1}{p-1}$ and $(BS \setminus S)$ is a wavelet set.
\end{defi}

\begin{ex}\textbf{Generalized scaling set}\\
Let $\Omega$ be given by Example 3.5, then it follows easily that for $S=\bigcup_{j=1}^{\infty}B^{-j}\Omega$, $\mu^*(S)=\dfrac{1}{p-1}$ and $BS \setminus S$ is a wavelet set. That is, $S$ forms a generalized scaling set.
\end{ex}

In fact, we have the following characterization of generalized scaling set in terms of wavelet set.

\begin{lem}
A measurable subset $S$ of $G^{*}$ is a generalized scaling set if and only if $S=\bigcup_{j=1}^{\infty}B^{-j}\Omega$ modulo null sets, for some wavelet set $\Omega$.
\end{lem}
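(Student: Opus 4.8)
The plan is to prove the two implications separately, using throughout two basic facts: the dual automorphism $B$ expands Haar measure by the factor $p$, i.e. $\mu^*(BE)=p\,\mu^*(E)$, and a wavelet set $\Omega$ satisfies $\mu^*(\Omega)=1$ with $\{B^n\Omega:n\in\mathbb Z\}$ tiling $G^*$, so in particular the translates $B^{-j}\Omega$, $j\ge 1$, are pairwise disjoint up to null sets. Both facts follow from the normalization $\mu^*(U^*)=1$ and the wavelet set characterization (conditions (a) and (b)), since $H^\perp$-translation congruence to $U^*$ forces $\mu^*(\Omega)=\mu^*(U^*)=1$.

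For the sufficiency direction I would start from a wavelet set $\Omega$ and set $S=\bigcup_{j=1}^\infty B^{-j}\Omega$. Because the pieces $B^{-j}\Omega$ are mutually disjoint modulo null sets, additivity of $\mu^*$ gives $\mu^*(S)=\sum_{j=1}^\infty p^{-j}\mu^*(\Omega)=\sum_{j=1}^\infty p^{-j}=\frac{1}{p-1}$. A short computation then shows $BS=\bigcup_{j=0}^\infty B^{-j}\Omega=\Omega\cup S$ with $\Omega$ disjoint from $S$, whence $BS\setminus S=\Omega$ is a wavelet set; by the definition of a generalized scaling set this makes $S$ a generalized scaling set.

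The necessity direction is where the real work lies. Given a generalized scaling set $S$, I put $\Omega=BS\setminus S$, which is a wavelet set by hypothesis, and aim to recover $S=\bigcup_{j\ge 1}B^{-j}\Omega$. The main obstacle, and the step the whole argument turns on, is that the relation $\Omega=BS\setminus S$ says nothing directly about $S\setminus BS$, so I must first prove the nesting $S\subseteq BS$ modulo null sets. I would establish this purely by measure counting: from $\mu^*(BS)=p\,\mu^*(S)=\frac{p}{p-1}$ together with the disjoint decomposition $BS=(BS\cap S)\cup(BS\setminus S)$ and $\mu^*(BS\setminus S)=\mu^*(\Omega)=1$, one obtains $\mu^*(BS\cap S)=\frac{p}{p-1}-1=\frac{1}{p-1}=\mu^*(S)$. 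Since $BS\cap S\subseteq S$ and both sets have the same finite measure, this yields $S\subseteq BS$ up to null sets, equivalently $B^{-1}S\subseteq S$, so the sets $B^{-j}S$ decrease with $j$.

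With the nesting in hand the remainder is a telescoping argument. Applying $B^{-(j-1)}$ to $\Omega=BS\setminus S$ gives $B^{-j}\Omega=B^{-(j-1)}S\setminus B^{-j}S$, and because the $B^{-j}S$ are nested these are disjoint shells whose partial unions collapse, $\bigcup_{j=1}^N B^{-j}\Omega=S\setminus B^{-N}S$. Letting $N\to\infty$ gives $\bigcup_{j=1}^\infty B^{-j}\Omega=S\setminus\bigcap_{N\ge 1}B^{-N}S$, and since $\mu^*(B^{-N}S)=p^{-N}\mu^*(S)\to 0$ the intersection is null. Hence $S=\bigcup_{j=1}^\infty B^{-j}\Omega$ modulo null sets, which closes the equivalence.
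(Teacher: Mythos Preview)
Your proof is correct. The sufficiency direction is identical to the paper's. For the necessity direction, however, you take a different route than the paper. The paper invokes the tiling property of the wavelet set $\Omega=BS\setminus S$ (Theorem~3.3) to assert that $\{B^j\Omega:j\in\mathbb Z\}$, ``or equivalently $\{S,\,B^j\Omega:j\ge 0\}$,'' partitions $G^*$, from which $S\subset\bigcup_{j\ge 1}B^{-j}\Omega$ follows and equal measure finishes. You instead never appeal to the tiling: you extract $S\subseteq BS$ directly from the measure identity $\mu^*(BS\cap S)=\mu^*(BS)-\mu^*(\Omega)=\tfrac{p}{p-1}-1=\mu^*(S)$, and then telescope the shells $B^{-j}\Omega=B^{-(j-1)}S\setminus B^{-j}S$. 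This is arguably cleaner, because the paper's ``equivalently'' step silently presupposes that $S$ is disjoint from $B^j\Omega$ for $j\ge 1$, which itself requires $S\subseteq BS$; your measure-counting argument makes that nesting explicit rather than leaving it implicit. One small slip: to obtain $B^{-j}\Omega=B^{-(j-1)}S\setminus B^{-j}S$ you apply $B^{-j}$ (not $B^{-(j-1)}$) to $\Omega=BS\setminus S$; the formula itself is correct.
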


\begin{proof}
Suppose that $S=\bigcup_{j=1}^{\infty}B^{-j}\Omega$ modulo null sets, for some wavelet set $\Omega$. Thus $BS \setminus S = (\bigcup_{j=1}^{\infty}B^{1-j}\Omega) \setminus (\bigcup_{j=1}^{\infty}B^{-j}\Omega)=\Omega$, as the union is disjoint. We know that $\mu^*(\Omega)=1$ and hence
\begin{equation*}
\mu^*(S)
%=\mu^*(\bigcup_{j=1}^{\infty}B^{-j}\Omega)=\sum_{j=1}^{\infty}\mu^*(B^{-j}\Omega)
=\sum_{j=1}^{\infty}p^{-j}\mu^*(\Omega)
%=\sum_{j=1}^{\infty}p^{-j}
=\frac{1}{p-1}.
\end{equation*}
This proves that $S$ is a generalized scaling set.

Conversely, suppose that $S$ is a generalized scaling set. Then $\Omega=BS \setminus S$ is a wavelet set.
From theorem 3.3, it follows that $\{B^{j}\Omega:j \in \mathbb{Z}\}$, or equivalently $\{S, B^{j}\Omega:j \geq 0 ,j \in \mathbb{Z}\}$ forms a partition of $G^{*}$ a.e. That implies $S \subset 
%G^{*} \setminus (\bigcup_{j=0}^{\infty}B^{j}\Omega)=
\bigcup_{j=1}^{\infty}(B^{-j}\Omega)$ a.e. Since $\bigcup_{j=1}^{\infty}B^{-j}\Omega$ and $S$ both have same measure therefore, $S=\bigcup_{j=1}^{\infty}B^{-j}\Omega$ upto sets of measure 0.
\end{proof}

\noindent \textbf{Remark.} If $S$ is a generalized scaling set and $\Omega$ is the associated wavelet set, then the following consistency equation is satisfied.
\begin{align*}
\sum_{h \in H^{\perp}} 1_{S}(B^{-1}(\omega \oplus h))
%&=\sum_{h \in H^{\perp}} 1_{BS}(\omega \oplus h)\\
	&=\sum_{h \in H^{\perp}} 1_{\cup_{k \leq 0}B^{k}\Omega}(\omega \oplus h)\\
%	&=\sum_{h \in H^{\perp}}\sum_{k \leq 0} 1_{B^{k}\Omega}(\omega \oplus h)\\
	&=\sum_{k \leq 0}\sum_{h \in H^{\perp}}1_{B^{k}\Omega}(\omega \oplus h)
\end{align*}

\begin{thm}
A measurable set $S \subset G^{*}$ is a generalized scaling set if and only if the following conditions hold
\begin{enumerate}
\item[$(i)$] $\mu^*(S)=\frac{1}{p-1}$,
\item[$(ii)$] $S \subset BS$ (modulo null sets),
\item[$(iii)$] $lim_{k \rightarrow \infty}1_{S}(B^{-k}(\omega))=1$,  for a.e $\omega \in G^{*}$,
\item[$(iv)$] $\sum_{\sigma=0}^{p-1}\eta(\omega\oplus 0.\sigma)=\eta(B \omega)\oplus 1$ a.e, where $\eta(\omega)=\sum_{h \in H^{\perp}}1_{S}(\omega \oplus h)$.
\end{enumerate}
\end{thm}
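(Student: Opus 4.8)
The plan is to reduce everything to two facts already in hand: the earlier lemma identifying generalized scaling sets with the sets $\bigcup_{j\ge 1}B^{-j}\Omega$ for a wavelet set $\Omega$, and the wavelet-set characterization of Theorem 3.3. Throughout I set $\Omega := BS\setminus S$. Before starting I would record the routine facts used repeatedly: since $B$ scales the Haar measure by $p$ one has $\mu^*(BE)=p\,\mu^*(E)$ and $1_{BE}(\omega)=1_E(B^{-1}\omega)$; and, separating off the first fractional coordinate, the index family $\{0.\sigma\oplus h:\sigma=0,1,\dots,p-1,\ h\in H^{\perp}\}$ is exactly $B^{-1}H^{\perp}$, whence the bridge identity $\sum_{\sigma=0}^{p-1}\eta(\omega\oplus 0.\sigma)=\sum_{h\in H^{\perp}}1_{S}(\omega\oplus B^{-1}h)$. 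This single identity links condition $(iv)$ to the translation-congruence statements and is the technical heart of both directions.

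For the forward direction, assume $S$ is a generalized scaling set. Condition $(i)$ is part of the definition. By the earlier lemma I write $S=\bigcup_{j\ge 1}B^{-j}\Omega$ disjointly (modulo null sets), so $BS=\Omega\cup S$ gives $(ii)$ at once and also $1_{BS}=1_S+1_\Omega$. For $(iii)$ I observe that $B^{-k}\omega\in S$ iff $\omega\in\bigcup_{m\le k-1}B^{m}\Omega$, and since $\{B^{m}\Omega\}_{m\in\mathbb Z}$ tiles $G^{*}$ by Theorem 3.3(a), a.e.\ $\omega$ lies in a unique $B^{m_0}\Omega$, so $1_S(B^{-k}\omega)=1$ once $k>m_0$. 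For $(iv)$ the bridge identity gives $\sum_{\sigma}\eta(\omega\oplus 0.\sigma)=\sum_{h}1_S(\omega\oplus B^{-1}h)=\sum_{h}1_{BS}(B\omega\oplus h)$; splitting $1_{BS}=1_S+1_\Omega$ and using that $\Omega$ is $H^{\perp}$-translation congruent to $U^{*}$, so its translates partition $G^{*}$ and $\sum_{h}1_\Omega(\,\cdot\,\oplus h)=1$ a.e., leaves exactly $\eta(B\omega)+1$ (here $\eta$ is integer-valued and the $\oplus 1$ is ordinary addition).

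For the converse, assume $(i)$–$(iv)$; by the definition it suffices to show $\Omega=BS\setminus S$ is a wavelet set, which I do by verifying the two hypotheses of Theorem 3.3. Condition $(ii)$ makes $\{B^{n}S\}_{n\in\mathbb Z}$ an increasing chain, so the sets $B^{n}\Omega=B^{n+1}S\setminus B^{n}S$ are pairwise disjoint and $\bigcup_n B^{n}\Omega=(\bigcup_n B^{n}S)\setminus(\bigcap_n B^{n}S)$; condition $(iii)$ forces $\bigcup_n B^{n}S=G^{*}$ a.e., while $\mu^*(B^{-k}S)=p^{-k}/(p-1)\to 0$ forces $\bigcap_n B^{n}S$ to be null, giving the tiling $(a)$. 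For $(b)$ I compute the multiplicity $\sum_{h}1_\Omega(\omega\oplus h)=\sum_{h}1_{BS}(\omega\oplus h)-\eta(\omega)$; rewriting the first sum via $1_{BS}(\nu)=1_S(B^{-1}\nu)$ and the bridge identity applied at $B^{-1}\omega$ turns it into $\sum_{\sigma}\eta(B^{-1}\omega\oplus 0.\sigma)$, which by $(iv)$ equals $\eta(\omega)+1$. Hence $\sum_{h}1_\Omega(\omega\oplus h)=1$ a.e., which says precisely that the $H^{\perp}$-translates of $\Omega$ partition $G^{*}$, i.e.\ $\rho_{|\Omega}$ is a bijection onto $U^{*}$ and $\Omega$ is $H^{\perp}$-translation congruent to $U^{*}$. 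With $(a)$ and $(b)$ Theorem 3.3 makes $\Omega$ a wavelet set, and together with $(i)$ this is the definition of a generalized scaling set.

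The main obstacle is getting the bookkeeping in the bridge identity exactly right: one must verify that letting $\sigma$ run over $\{0,\dots,p-1\}$ while $h$ runs over $H^{\perp}$ reproduces the coset family $B^{-1}H^{\perp}$ bijectively, with no repetition, and that the conjugation $1_{BS}(\nu)=1_S(B^{-1}\nu)$ is applied consistently so that $(iv)$—an identity of $H^{\perp}$-periodic, integer-valued functions—translates cleanly into the multiplicity-one condition $\sum_{h}1_\Omega(\,\cdot\,\oplus h)=1$. Once this identity is established, both implications are short, and the measure-theoretic limit arguments for the tiling in the converse are routine given the geometric decay $\mu^*(B^{-k}S)=p^{-k}/(p-1)$.
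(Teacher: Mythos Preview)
Your proof is correct and follows essentially the same route as the paper: Lemma~4.5 for the forward direction and verification of Theorem~3.3(a),(b) for $\Omega=BS\setminus S$ in the converse. The only differences are presentational---you isolate the ``bridge identity'' $\sum_{\sigma}\eta(\omega\oplus 0.\sigma)=\sum_{h}1_{BS}(B\omega\oplus h)$ up front and reuse it in both directions, and in the converse you obtain the dilation tiling directly from the increasing chain $\{B^{n}S\}$ and $\mu^{*}(B^{-k}S)\to 0$, whereas the paper first reestablishes $S=\bigcup_{j\ge 1}B^{-j}\Omega$ and then reads off $\sum_{j}1_{\Omega}(B^{j}\omega)=1$ from~(iii); both arguments are equivalent.
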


\begin{proof}
We first assume that $S$ is a generalized scaling set. Condition (i) follows from the definition of generalized scaling set and (ii) follows from previous lemma as $S=\bigcup_{j=1}^{\infty}B^{-j}\Omega$, for some wavelet set $\Omega$. Further, the union $\bigcup_{j=1}^{\infty}B^{-j}\Omega$ is disjoint therefore
$$
1_{S}(B^{-k}\omega)=1_{\bigcup_{j=1}^{\infty}B^{-j}\Omega}(B^{-k}\omega)
%=\sum_{j=1}^{\infty}1_{B^{-j}\Omega}(B^{-k}\omega)
=\sum_{j=-k+1}^{\infty}1_{\Omega}(B^{j}\omega).
$$

Using the fact that $\sum_{j \in \mathbb{Z}}1_{\Omega}(B^{j}\omega)=1$, for a.e $\omega$, we get $1_{S}(B^{-k}\omega)\rightarrow 1$, as $k \rightarrow \infty$ a.e. This proves (iii).
		
Note that $\eta(\omega)=\sum_{h \in H^{\perp}}1_{S}(\omega\oplus h)=\sum_{h \in H^{\perp}}\sum_{j=1}^{\infty}1_{\Omega}(B^{j}(\omega \oplus h))$.

\begin{align*}
\sum_{\sigma=0}^{p-1}\eta(\omega\oplus 0.\sigma)&=\sum_{h \in H^{\perp}}\sum_{j=1}^{\infty}\sum_{\sigma=0}^{p-1}1_{\Omega}(B^{j}(\omega\oplus h\oplus 0.\sigma))\\
%		&=\sum_{h \in H^{\perp}}\sum_{j=0}^{\infty}\sum_{\sigma=0}^{p-1}1_{\Omega}(B^{j}(B \omega\oplus B h\oplus B(0.\sigma)))\\
		&=\sum_{h \in H^{\perp}}\sum_{j=0}^{\infty}\sum_{\sigma=0}^{p-1}1_{\Omega}(B^{j}(B \omega\oplus Bh \oplus \sigma.0))\\
		&=\sum_{h \in H^{\perp}}\sum_{j=0}^{\infty}1_{\Omega}(B^{j}(B\omega\oplus h))
		%&=\sum_{h \in H^{\perp}}\sum_{j=0}^{\infty}\chi_{\Omega}(B^{j}(B \omega+h))
\end{align*}

\begin{eqnarray}
		\sum_{\sigma=0}^{p-1}\eta(\omega\oplus 0.\sigma)=\sum_{h \in H^{\perp}}\sum_{j=1}^{\infty}1_{\Omega}(B^{j}(B\omega\oplus h))+\sum_{h \in H^{\perp}}1_{\Omega}(B\omega\oplus h)
\end{eqnarray}

\begin{align*}
%		& \quad \quad =\sum_{h \in H^{\perp}}\sum_{j=1}^{\infty}1_{\Omega}(B^{j}(B \omega\oplus h))+1 (\text{since } \sum_{h \in H^{\perp}}1_{\Omega}(\omega\oplus h)=1 \text{ a.e })\\
		& \quad \quad =\eta(B\omega)\oplus 1, \text{ for a.e } \omega.
\end{align*}

Thus condition (iv) is also satisfied.
		
Next, suppose that conditions from (i) to (iv) are given.
		
Let $\Omega=BS \setminus S$, then from (ii) we get that $B^{k} \Omega \cap B^{j}\Omega=\phi$, for $j,k \in \mathbb{Z},j\neq k$ and this implies that $\lbrace B^{j} \Omega : j \in \mathbb Z\rbrace$ is a family of a.e mutually disjoint sets.
		
From (ii) we can write $\bigcup_{j=1}^{\infty}B^{-j}\Omega \subset S$ but both these sets have same measure so they are equal a.e. and hence
$$ 
1_{S}(\omega)=1_{\cup_{j=1}^{\infty}B^{-j}\Omega}(\omega)=\sum_{j=1}^{\infty}1_{\Omega}(B^{j}\omega), \text{ for a.e } \omega \in G^{*}
$$
that is, $1_{S}(B^{-k}\omega)=\sum_{j=-k+1}^{\infty}1_{\Omega}(B^{j}\omega)$ a.e. Then from (iii) we get, $\sum_{j \in \mathbb{Z}}1_{\Omega}(B^{j}\omega)=1 \text{ a.e }$.

From condition (iv) and equation (6), we have
$$ 
\sum_{h \in H^{\perp}}1_{\Omega}(B\omega\oplus h)=1 \text{ a.e. }
$$
Therefore $\Omega$ is a wavelet set and hence $S$ is a generalized scaling set.
\end{proof}

The following theorem  gives sufficient conditions for a set to be a generalized scaling set.

\begin{thm}
Suppose that a measurable set $S \subset G^{*}$ contains a neighbourhood of $0$, invariant under $B^{-1}$ and the corresponding characteristic function $1_{S}$ satisfies the equation 
\begin{equation*}
1+\sum_{h \in H^{\perp}}1_{S}(\omega \oplus h)=\sum_{h \in H^{\perp}} 1_{S}(B^{-1}(\omega \oplus h)) \text{ for  a.e.  } \omega,
\end{equation*}
then $\Omega=BS \backslash S$ is a wavelet set.
\end{thm}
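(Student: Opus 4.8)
The plan is to verify, for $\Omega=BS\setminus S$, the two conditions of the wavelet set characterization (Theorem 3.3): that $\{B^{n}\Omega:n\in\mathbb Z\}$ tiles $G^{*}$ and that $\Omega$ is $H^{\perp}$-translation congruent to $U^{*}$, both up to null sets. I would reduce each condition to a pointwise identity for the fiber sums and exploit the two hypotheses separately: the consistency equation delivers the translation congruence almost immediately, while the $B^{-1}$-invariance together with the fact that $S$ contains a neighbourhood of $0$ delivers the dilation tiling. First I would record that invariance under $B^{-1}$ means $B^{-1}S\subseteq S$, equivalently $S\subseteq BS$, so that $1_{\Omega}=1_{BS}-1_{S}$ pointwise and the filtration $\{B^{n}S\}_{n\in\mathbb Z}$ is increasing; consequently the sets $B^{n}\Omega=B^{n+1}S\setminus B^{n}S$ are the disjoint ``annuli'' of this filtration and are mutually disjoint a.e.

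For the translation congruence I would compute, using $1_{BS}(y)=1_{S}(B^{-1}y)$ and the hypothesis,
\begin{equation*}
\sum_{h\in H^{\perp}}1_{\Omega}(\omega\oplus h)=\sum_{h\in H^{\perp}}1_{S}(B^{-1}(\omega\oplus h))-\sum_{h\in H^{\perp}}1_{S}(\omega\oplus h)=1\quad\text{a.e.}
\end{equation*}
This says precisely that each $H^{\perp}$-coset has exactly one representative in $\Omega$, i.e.\ that $\rho_{|\Omega}$ is a bijection of $\Omega$ onto $U^{*}$, which is condition (b); as a byproduct it gives $\mu^{*}(\Omega)=1$. This step is routine and I am confident it goes through verbatim, the only caveat being that the termwise manipulation is legitimate because the equation presupposes $\eta(\omega)=\sum_{h}1_{S}(\omega\oplus h)<\infty$ a.e.

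For the tiling condition I would telescope, obtaining $\sum_{n=-N}^{M}1_{\Omega}(B^{n}\omega)=1_{S}(B^{-N-1}\omega)-1_{S}(B^{M}\omega)$. Since $S$ contains some $U^{*}_{l_{0}}$ and $B^{-1}$ contracts toward $0$, the $B^{-1}$-invariance forces $B^{-n}\omega\in S$ for all large $n$, so the first term tends to $1$ a.e.\ as $N\to\infty$; hence $\sum_{n\in\mathbb Z}1_{\Omega}(B^{n}\omega)=1-\lim_{M\to\infty}1_{S}(B^{M}\omega)$, and tiling is equivalent to $\lim_{M\to\infty}1_{S}(B^{M}\omega)=0$ a.e., i.e.\ to nullity of the scale-invariant core $C=\bigcap_{m\ge 0}B^{-m}S$.

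I expect this last point to be the main obstacle. One checks that $B^{-1}C=C$, so $\mu^{*}(C)=\tfrac1p\,\mu^{*}(C)$ forces $\mu^{*}(C)\in\{0,\infty\}$, and the annular decomposition $S=C\sqcup\bigsqcup_{j\ge1}B^{-j}\Omega$ gives $\mu^{*}(S)=\mu^{*}(C)+\tfrac{1}{p-1}$. Thus everything closes once $\mu^{*}(S)<\infty$: then $\mu^{*}(C)<\infty$ forces $\mu^{*}(C)=0$ (alternatively, Borel--Cantelli applied to the sets $B^{-M}S$ of measure $p^{-M}\mu^{*}(S)$ gives $1_{S}(B^{M}\omega)\to0$ directly), the tiling holds, and Theorem 3.3 yields that $\Omega$ is a wavelet set. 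Ruling out the infinite-measure core is the genuine subtlety, and it is where a finiteness input is unavoidable; I would therefore carry along the natural hypothesis that $S$ has finite Haar measure (which is automatic for scaling-type sets and yields $\mu^{*}(S)=\tfrac{1}{p-1}$ via $\mu^{*}(\Omega)=(p-1)\mu^{*}(S)=1$), noting that the degenerate choice $S=G^{*}$ formally meets the stated hypotheses yet gives $\Omega=\emptyset$, so some such input is essential.
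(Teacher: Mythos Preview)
Your translation-congruence step is exactly what the paper does: rewrite the consistency equation as $\sum_{h\in H^{\perp}}1_{\Omega}(\omega\oplus h)=1$ a.e.\ and read off condition (b) of Theorem~3.3.

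For the dilation tiling you are considerably more careful than the paper. The paper's argument for condition (a) is essentially two sentences: from $B^{-1}$-invariance of $S$ (so $S\subseteq BS$) and injectivity of $B$ it notes that the dilates $B^{n}\Omega$ are mutually disjoint, and then simply asserts that $\Omega$ tiles $G^{*}$ under dilation. No verification of covering is given. Your telescoping identity $\sum_{n=-N}^{M}1_{\Omega}(B^{n}\omega)=1_{S}(B^{-N-1}\omega)-1_{S}(B^{M}\omega)$, together with the neighbourhood-of-$0$ hypothesis to handle the $N\to\infty$ limit, is the natural way to close this, and your isolation of $C=\bigcap_{m\ge0}B^{-m}S$ as the obstruction to the $M\to\infty$ limit is on point. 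In short, you are not deviating from the paper's route; you are supplying the part of the dilation-tiling argument that the paper leaves implicit.

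Your caveat about finite measure is well taken and is not addressed in the paper. One remark that may sharpen it: if one reads the consistency equation as asserting that both sides are finite and equal a.e., then integrating the identity $1+\eta_{S}=\eta_{BS}$ over $U^{*}$ gives $1+\mu^{*}(S)=p\,\mu^{*}(S)$, hence $\mu^{*}(S)=\tfrac{1}{p-1}$ whenever $\mu^{*}(S)<\infty$; and your observation that $S=G^{*}$ is a formal counterexample shows that some finiteness must indeed enter. So the honest statement is that the paper tacitly assumes $\mu^{*}(S)<\infty$ (equivalently $\eta_{S}\in L^{1}(U^{*})$), and under that reading your argument is complete and in fact more detailed than the paper's.
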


\begin{proof}
From the above equation, we have
\begin{align*}
1 
%&=\sum_{h \in H^{\perp}} 1_{S}(B^{-1}(\omega \oplus h))-\sum_{h \in H^{\perp}} 1_{S}(\omega \oplus h)\\
%		&=\sum_{h \in H^{\perp}} 1_{BS}(\omega \oplus h)-\sum_{h \in H^{\perp}} 1_{S}(\omega \oplus h)\\ 
%		&=\sum_{h \in H^{\perp}} 1_{BS \backslash S}(\omega \oplus h)\\
		&=\sum_{h \in H^{\perp}} 1_{\Omega}(\omega \oplus h) %\text{ a.e }.
\end{align*}
Thus translates of $\Omega$ cover $G^*$ disjointly a.e.
% Suppose that $j<k$, then $B^{j}\Omega \cap B^{k}\Omega=(B^{j+1}S \backslash B^{j}S) \cap (B^{k+1}S \backslash B^{k}S)=\phi$,
  Since $S$ is invariant under $B^{-1}$ and $B$ is one-one, Then the dilates of $\Omega$ under $B$ are disjoint. 
  %The dilates of $\Omega$ will also cover $G^{*}$ a.e because $S$ contains a neighbourhood of 0. 
  Thus, $\Omega$ tiles $G^*$ a.e. under dilation. 	
\end{proof}

%\section{Acknowledgements}
%Research of the third author is supported by the Science and Engineering Research Board (SERB), a statutory body of the Department of Science and Technology, Government of India, Grant no. EMR/2017/002316.

\end{document}